\def\csname ver@subfig.sty\endcsname{}
\theoremstyle{thmstyleone}%
\newtheorem{theorem}{Theorem}%  meant for continuous numbers
\newtheorem{Proposition}[theorem]{Proposition}% 
\theoremstyle{thmstyletwo}%
\newtheorem{example}{Example}%
\newtheorem{remark}{Remark}%
\theoremstyle{thmstylethree}%
\newtheorem{definition}{Definition}%
\newtheorem{lem}{Lemma}[section]
\begin{document}

\title[Article Title]{Enhancing multigrid solvers for isogeometric analysis of nonlinear problems using polynomial extrapolation techniques}

%%=============================================================%%
%% GivenName	-> \fnm{Joergen W.}
%% Particle	-> \spfx{van der} -> surname prefix
%% FamilyName	-> \sur{Ploeg}
%% Suffix	-> \sfx{IV}
%% \author*[1,2]{\fnm{Joergen W.} \spfx{van der} \sur{Ploeg} 
%%  \sfx{IV}}\email{iauthor@gmail.com}
%%=============================================================%%

\author*[1,2]{\fnm{Abdellatif} \sur{Mouhssine}}\email{abdellatif.mouhssine@um6p.ma}

\author[1]{\fnm{Ahmed} \sur{Ratnani}}\email{ahmed.ratnani@um6p.ma}
\equalcont{These authors contributed equally to this work.}

\author[2]{\fnm{Hassane} \sur{Sadok}}\email{hassane.sadok@univ-littoral.fr}
\equalcont{These authors contributed equally to this work.} 
\affil*[1]{\orgdiv{The UM6P Vanguard Center}, \orgname{Mohammed VI Polytechnic University}, \orgaddress{\street{Lot 660 Hay Moulay Rachid}, \city{Benguerir}, \postcode{43150}, %\state{State}
\country{Morocco}}}

\affil[2]{\orgdiv{Laboratoire de Math\' ematiques Pures et Appliqu\' ees}, \orgname{Universit\'e du Littoral C\^ote d'Opale}, \orgaddress{\street{50 Rue F. Buisson}, \city{Calais Cedex}, \postcode{B.P. 699 - 62228}, %\state{State}
\country{France}}}

%\affil[3]{\orgdiv{Department}, \orgname{Organization}, \orgaddress{\street{Street}, \city{City}, \postcode{610101}, \state{State}, \country{Country}}}
%%==================================%%
%% Sample for unstructured abstract %%
%%==================================%%

\abstract{When used to accelerate the convergence of fixed-point iterative methods, such as the Picard method, which is a kind of nonlinear fixed-point iteration, polynomial extrapolation techniques can be very effective. The numerical solution of nonlinear problems is further investigated in this study. Particularly, using multigrid with isogeometric analysis as a linear solver of the Picard iterative method, which is accelerated by applying vector extrapolation techniques, is how we address the nonlinear eigenvalue Bratu problem and the Monge–Ampère equation. This paper provides quadratic convergence results for polynomial extrapolation methods. Specifically, a new theoretical result on the correlation between the residual norm and the error norm, as well as a new estimation for the generalized residual norm of some extrapolation methods, are given. We perform an investigation between the Picard method, the Picard method accelerated by polynomial extrapolation techniques, and the Anderson accelerated Picard method. Several numerical experiments show that the Picard method accelerated by polynomial extrapolation techniques can solve these nonlinear problems efficiently.

}

\keywords{Polynomial extrapolation methods, Fixed-point iterations, Picard method, Multigrid methods, Isogeometric analysis, MPE method, RRE method, Anderson acceleration, Restarted extrapolation methods, Bratu problem, Monge–Ampère equation}
%%\pacs[JEL Classification]{D8, H51}

%%\pacs[MSC Classification]{35A01, 65L10, 65L12, 65L20, 65L70}

\maketitle
\section{Introduction}\label{sec1}
Since there has been a recent surge of interest in the numerical solution of the nonlinear elliptic partial differential equation (PDE), this paper concerns an efficient method for using advanced computational approaches to solve challenging nonlinear problems such as the Bratu problem \cite{R1}, which is a nonlinear eigenvalue problem, which plays a significant role as a benchmarking tool for evaluating numerical algorithms \cite{R2,R3,R4}. Originating from the solid fuel ignition model in thermal combustion theory, the Bratu equation exemplifies a challenging nonlinear elliptic partial differential equation \cite{R5,R6}.
%Various numerical approaches have been proposed to address this challenge \cite{R7}, with notable methods including the Full Approximation Scheme (FAS) \cite{R8} and its Newton-Multigrid variant as two different nonlinear multigrid methods. Recently, Mohsen et al. have introduced enhancements to multigrid-based techniques for the Bratu problem by developing a new smoother \cite{R9,R10}.
And the  Monge-Ampère equation \cite{R7, R8, R9, R10} which is a fully nonlinear elliptic PDE.

For the efficient solution of both linear and nonlinear systems of algebraic equations, multigrid methods are a useful tool. Several multigrid techniques are known; refer to \cite{R11,R12,R13}. We investigate the geometric multigrid approaches (GMG) \cite{R14} with isogeometric discretization (IGA) (refer to \cite{R15,R16,R17}) in this study. Convergence acceleration techniques are frequently employed because the convergence of these sequences produced by linear or nonlinear fixed-point iterative methods is too slow. Techniques for vector extrapolation methods were particularly developed to handle these vector sequences. Vector polynomial extrapolation methods \cite{R18,R19}, vector and topological epsilon algorithms \cite{R20,R21}, and Anderson acceleration methods \cite{R22,R23,R24,R25,R26} are some of these techniques. We will restrict our focus in this study to polynomial extrapolation techniques \cite{R14} such as the reduced rank extrapolation method (RRE), which was attributed to Eddy \cite{R27} and Mesina \cite{R28}  and the minimal polynomial extrapolation (MPE) method \cite{R29}.

Regarding the efficiency of the multi-iterative approach \cite{R14} to linear problem solving that combines geometric multigrid methods with IGA and polynomial extrapolation techniques, we investigate in this paper how to solve nonlinear elliptic problems by integrating polynomial extrapolation techniques with geometric multigrid approaches. Specifically focusing on the Bratu problem and the Monge-Ampère equation. Our contribution is a numerical demonstration of the efficiency of vector extrapolation methods. It involves using the Picard iterative method instead of the commonly used Newton variants. We then accelerate the Picard iterations by applying the restarted polynomial extrapolation methods \cite{R14,R18,R30,R31}, either minimal polynomial extrapolation and reduced rank extrapolation.
During each Picard iteration, a multigrid scheme (such as the V-cycle) is employed to solve the linear system using IGA. This combination results in the MPE-Picard-MG and RRE-Picard-MG methods. We perform extensive numerical experiments to validate the efficiency of our proposed techniques, showcasing their potential for enhancing the performance of multigrid solvers for nonlinear problems in the context of IGA. The reason for that is widely known: linking polynomial extrapolation techniques with Picard enhances Picard's convergence, since the convergence of Picard stagnates and, in most cases, diverges. It is evident that the Picard method diverges when the Bratu problem's parameter $\lambda$ is high. In contrast, our approaches MPE-Picard-MG and RRE-Picard-MG converge and are robust with respect to the parameters  $\lambda$ in Bratu's problem, the spline degree $p$ and the  mesh size $h$. Therefore, these combinations can lead to improved, robust convergence results and a large reduction in CPU time when compared to Picard's method without extrapolation (Picard-MG) or even Picard accelerated using Anderson acceleration (AA-Picard-MG) for the resolution of nonlinear problems such as the Bratu problem and the Monge-Ampère equation.

The reminder of this paper is as follows: In Section \ref{sec2}, a summary of isogeometric analysis (IGA) is given, along with a discussion of multigrid schemes. In Section \ref{sec3}, polynomial extrapolation techniques such as reduced rank extrapolation (RRE), minimal polynomial extrapolation (MPE), and other acceleration techniques such as the Anderson acceleration method (AA) are introduced. Their implementations are described in depth. A new estimation for the norm of the generalized residual of some vector extrapolation methods is provided in Theorem~{\upshape\ref{theom1}} and a new theoretical result about the connection between error norm and residual norm for some polynomial extrapolation methods is presented in Theorem~{\upshape\ref{theom3}}. In order to demonstrate the effectiveness of the polynomial extrapolation techniques in improving the convergence rate of Picard's iterations for the resolution of the  Bratu problem and the Monge–Ampère equation, Section \ref{sec4} provides a variety of numerical experiments. The outcomes of these studies involving the RRE-Picard-MG and MPE-Picard-MG approaches are also included in this section, along with a comparison with the Anderson-accelerated Picard method (AA-Picard-MG).

\section{Multigrid methods}\label{sec2}
Iterative numerical techniques known as multigrid methods were first created to solve huge algebraic systems that resulted from the discretization of both linear and nonlinear elliptic partial differential equations \cite{R11,R12,R13}.
Let us consider a linear PDE with Dirichlet boundary conditions. 
\begin{equation}
\begin{array}{rcl}
Lu & = & f, \label{equat1}
\end{array}
\end{equation} 
in a bounded domain $\Omega$, which we will call the unit square for simplicity. The mesh of stepsize $h$ covers the domain $\Omega$. Additionally, assume that the isogeometric analysis approach is used to discretize the equation (\ref{equat1}). This results in a set of linear equations: 
\begin{equation}
\begin{array}{rcl}
A^h u^h & = & F^h, \label{equat2}
\end{array}
\end{equation} 
where $A^h$ is a square nonsingular SPD matrix; it is called the stiffness matrix, $F^h$ is a specified vector that originates from the right-hand side of (\ref{equat1}) and the boundary conditions, and $u^h$ is the vector of unknowns. Let $e^h = u^h- v^h$ be the associated algebraic error, and let $v^h$ be the current approximation of $u^h$ provided by the multigrid iteration. The error is known to satisfy the following residual equation (\ref{equat3}): 
\begin{equation}
\begin{array}{rcl}
A^h e^h & = & r^h, \label{equat3}
\end{array}
\end{equation} 
where $r^h= F^h-A^h v^ h$ is the residual vector. Knowing that the grid $\Omega^h$ of mesh size $h$ is called the ﬁne grid, we deﬁne a coarse grid $\Omega^H$ with mesh size
$2h$. Using multigrid techniques, one may approximate the error on the coarse grid. Therefore, we must solve the coarse grid problem $A^{2h} e^{2h} = r^{2h}$, project the solution onto the fine grid, and update the previous approximation $v^h$. Then $v^h + e^h$
could be a better approximation for the exact solution $u^h$.\\
The mechanism that transforms the data from the coarse grid $\Omega^{2h}$ to the fine grid $\Omega^{h}$ is called the interpolation or prolongation operator $P_{2h} ^{h}$, and the one who transforms the data from the fine grid to the coarse grid is called the restriction operator $R_h ^{2h}$ (which is often chosen as $(P_{2h}^h )^{T}$). These two operators are defined as follows:

$$ P_{2h}^h: \Omega^{2h} \longrightarrow \Omega^{h},$$
and
$$ R_h ^{2h}: \Omega^{h} \longrightarrow \Omega^{2h}.$$
In the multigrid methods, we need to define the coarse grid matrix $A^{2h}$, we can take $A^{2h}$ as the result of the isogeometric discretization to the differential operator on $\Omega^{2h}$, i.e., the $\Omega^{2h}$ version of the initial matrix system's $A^h$. We can also define $A^{2h}$ by the Galerkin projection (it is also called the Galerkin condition): Since the residual equation on the fine grid is given by (\ref{equat3}), we suppose that the error $e^h$ is in the range of the interpolation operator; therefore, there is a vector $e^{2h}$ on $\Omega^{2h}$ such that $e^h= P_{2h}^h e^{2h}$. Hence,
$$A^ h P_{2h}^h e^{2h}= r^h, $$
when we multiply the two sides of this equation by the restriction operator $R_{h}^{2h}$, we obtain
$$  R_{h}^{2h} A^ h P_{2h}^h e^{2h}= R_{h}^ {2h} r^h. $$
If we compare this last equation with $A^{2h} e^{2h} = r^{2h},$ we conclude that the coarse grid matrix is given by (\ref{equat4}):
\begin{equation}
\begin{array}{rcl}
A^{2h} & = & R_{h}^{2h} A^ h P_{2h}^h. \label{equat4}
\end{array}
\end{equation}

\subsection{Description of multigrid algorithms}\label{subsubsec2}
We explain the Two-Grid (TG) version of the multigrid algorithm. Here is a description of the TG algorithm \cite{R11,R12,R13}:

\begin{itemize}
\item Pre-Smooth on $A^hu^h=F^h$.
\item Compute the residual: $r^h= F^h-A^hu^h.$\;\;(Note: $A^he^h=r^h$).
    \item Solve the coarse residual equation: $A^{2h}e^{2h}= R_h^{2h}r^h$.
    \item Correct the fine grid approximation: $u^{h} \longleftarrow u^{h} + P_{2h}^{h}e^{2h} $.
\item Post-Smooth on $A^hu^h=F^h$.
\end{itemize}

The Two-Grid approach is generalized by the multigrid method. Using nested coarser grids that define various levels, the TG's coarse grid problem is solved recursively. As a result, we get the V-Cycle scheme \cite{R11,R12,R13} (Algorithm~\ref{algo1}), one of the families of multigrid schemes. 

  \begin{algorithm}
\caption{V-Cycle algorithm}\label{algo1}
\begin{algorithmic}[1]
\Require $A^{h}, F^{h}, v^{h}, \nu_1, \nu_2, \text{maxiter}$ 
\Ensure $u^{h}$
\State $k \Leftarrow 0$
\While{$k \leq \text{maxiter and not converged}$}
 \If {$\Omega^h$ is the coarsest grid}
            \State $u^h \Leftarrow (A^h)^{-1} F^h$     \Comment{Direct solve on the coarsest grid}
           
        \Else

            \State $u^h \Leftarrow  \text{smoother}(A^h,F^h,v^h,\nu_1)$
            \Comment{Pre-smooth using relaxation}
            \State $r^h \Leftarrow  F^h- A^h u^h$  \Comment{Compute the residual}
            \State $r^{2h}\Leftarrow  R_h^{2h} r^h$  \Comment{Restrict the residual}
            \State $e^{2h} \Leftarrow 0$ \Comment{Initialize coarse grid error}
            \State $e^{2h} \Leftarrow  \text{MGV}(A^{2h},r^{2h}, e^{2h},\nu_1,\nu_2)$  \Comment{V-cycle recursion on coarse grid}
            \State $u^h \Leftarrow  u^h + P_{2h}^{h}e^{2h}$  \Comment{Correct the approximation}
            \State $ u^h \Leftarrow  \text{smoother}(A^h,F^h,u^h,\nu_2)$   \Comment{Post-smooth using relaxation}
\EndIf
\State $k \Leftarrow  k + 1$
\EndWhile
\end{algorithmic}
\end{algorithm}

\subsection{B-splines $\&$ IGA}
In one-dimensional space, a vector of nodes or Knots vector $\xi_1,\xi_2,...,\xi_{N+p+1}$ is non-decreasing
set of coordinates in the parameter space, where $N$ is the number of control points and $p$ is
the degree of the spline. If the nodes $\xi_{i}, \; i=1,...,N+p+1$ are equidistant, we say that this
vector of nodes is uniform. And if the nodes in the first and last position are identical, $p+1$
times, we say that this vector of nodes is open (open Knots vector).

Given $m$ and $p$ as natural numbers, let us consider a sequence of non-decreasing real numbers: $T= \{t_{i}\}_{0\leq i \leq m}$. T is called the knots sequence. From a knots sequence, we can generate a B-Spline family using the recurrence formula (~\ref{moneq}). For more details, see \cite{R10,R15,R32,R33}.
 \begin{definition}[B-Splines using Cox-DeBoor Formula]
 The jth B-spline of degree $p$ is defined by the recurrence relation:
 \begin{equation}
    N^p_j = \frac{t-t_j}{t_{j+p}-t_j}  N^{p-1}_j + \frac{t_{j+p+1}-t}{t_{j+p+1}-t_{j+1}}  N^{p-1}_{j+1},
 \label{moneq}
  \end{equation}   
       where 
       $$ N^0_j(t)= \chi_{[t_j,t_{j+1}]}(t),$$
for $0<j<m-p-1.$           
 \end{definition}
%\subsubsection*{$\quad$ B-splines properties and examples} 
\begin{Proposition}[B-Splines properties]\label{prop3}
\begin{itemize}
\item Compact support. $N^p_j(t)=0 \, \forall t \notin[t_j,t_{j+p+1}).$ 
 \item If $t \in[t_j,t_{j+1}),$ then only the B-splines $\{ N^p_{j-p},..., N^p_j\} $  are non vanishing at t.  
\item Non-negativity. $ N^p_j(t)\geq 0 \,\,\forall t \in [t_j,t_{j+p+1}).$ 
 \item Partition of the unity. $\sum{N^p_i(t)}=1,\, \forall t \in \mathbb{R}.$ 
\end{itemize}      
\end{Proposition}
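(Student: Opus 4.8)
The plan is to establish all four properties simultaneously by induction on the spline degree $p$, using the Cox--DeBoor recurrence (\ref{moneq}) as the inductive engine and the indicator function $N^0_j=\chi_{[t_j,t_{j+1}]}$ as the base case. Throughout I would adopt the standard convention that any fraction appearing in (\ref{moneq}) whose denominator vanishes (because of coincident knots) is set to zero, so that every term stays well defined and the induction goes through even on open or nonuniform knot vectors.

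For the compact support property, the case $p=0$ is immediate, since $\chi_{[t_j,t_{j+1}]}$ vanishes outside $[t_j,t_{j+1})=[t_j,t_{j+p+1})$. For the inductive step I would assume that $N^{p-1}_j$ is supported in $[t_j,t_{j+p})$ and $N^{p-1}_{j+1}$ in $[t_{j+1},t_{j+p+1})$; because $N^p_j$ is a linear combination of these two functions, its support lies in the union $[t_j,t_{j+p})\cup[t_{j+1},t_{j+p+1})=[t_j,t_{j+p+1})$, which is exactly the claim. The local support statement then follows at once: by compact support $N^p_i(t)\neq 0$ forces $t_i\leq t<t_{i+p+1}$, and intersecting this with the hypothesis $t\in[t_j,t_{j+1})$ yields simultaneously $i\leq j$ and $i\geq j-p$, so only $N^p_{j-p},\dots,N^p_j$ survive.

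Non-negativity is again an induction on $p$. On the support interval $t\in[t_j,t_{j+p+1})$ the two coefficients $\tfrac{t-t_j}{t_{j+p}-t_j}$ and $\tfrac{t_{j+p+1}-t}{t_{j+p+1}-t_{j+1}}$ are each nonnegative (numerators and denominators having the same sign under the nondecreasing knot ordering), and by the inductive hypothesis $N^{p-1}_j$ and $N^{p-1}_{j+1}$ are nonnegative, so the combination $N^p_j$ is nonnegative as well; the base case is clear since an indicator function is nonnegative.

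The partition of unity is the step I expect to be the main obstacle, because it cannot be argued termwise and instead requires a careful reindexing of the sum. Starting from $\sum_i N^p_i$, I would split the recurrence into two sums, then shift the summation index $i\mapsto i-1$ in the sum carrying the factor $N^{p-1}_{i+1}$ so that both sums range over the same spline $N^{p-1}_i$, and finally add the two coefficients. The key computation is
\[
\frac{t-t_i}{t_{i+p}-t_i}+\frac{t_{i+p}-t}{t_{i+p}-t_i}=\frac{t_{i+p}-t_i}{t_{i+p}-t_i}=1,
\]
so the sum collapses to $\sum_i N^{p-1}_i$, which equals $1$ by the inductive hypothesis, while the base case is immediate because exactly one $N^0_i(t)$ equals $1$ at any point. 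The delicate point here is verifying that the index shift neither creates nor drops boundary terms; this is precisely where the compact support property is invoked, since splines with out-of-range indices vanish and the two reindexed sums therefore agree exactly.
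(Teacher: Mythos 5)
Your proof is correct, but it necessarily takes a different route from the paper, because the paper offers no proof at all: its entire argument for Proposition~\ref{prop3} is a citation to the IGA reference \cite{R15}. What you supply is the standard self-contained induction on the degree $p$ through the Cox--De Boor recurrence (\ref{moneq}): compact support and non-negativity termwise (both coefficients in (\ref{moneq}) being nonnegative on the relevant supports, with the zero-denominator convention for repeated knots), the local-support statement as a corollary of compact support plus monotonicity of the knots, and partition of unity via the index shift and the collapse of the two coefficients to $1$. This buys verifiability inside the paper where the paper outsources it. One point deserves sharpening: on the finite knot sequence $T=\{t_i\}_{0\le i\le m}$ used in the paper, the reindexed sum does leave two boundary terms, $N^{p-1}_0$ carrying the coefficient $\frac{t-t_0}{t_p-t_0}$ and $N^{p-1}_{m-p}$ carrying $\frac{t_m-t}{t_m-t_{m-p}}$ instead of $1$, and these discrepancies are harmless only where those two splines vanish, i.e.\ for $t\in[t_p,t_{m-p})$, which is the whole parameter interval precisely when the knot vector is open. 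So your argument establishes partition of unity on $[t_p,t_{m-p})$ --- or everywhere for a biinfinite knot sequence --- rather than literally ``$\forall t\in\mathbb{R}$''; your appeal to ``splines with out-of-range indices vanish'' is right in spirit, but the genuine issue is the deficient coefficients attached to the extreme in-range splines, not out-of-range ones. This restriction is not a defect of your proof: the proposition as stated in the paper is itself false outside the knot span, where every $N^p_i$ vanishes and the sum is $0$, so any correct proof must implicitly carry the same qualification.
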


\begin{proof}
    For the proof of these properties, see \cite{R15}.
\end{proof}

\begin{remark}
\begin{itemize}
\item In both methods (finite element and B-spline), the same variational formulation is used.
\item In the classical finite element method, the Lagrange interpolation functions are chosen,
and in the B-spline method, one chooses the B-splines functions. For degree one, the
basis of the two methods are coincident, but they are different in degree greater than
1.
\item Lagrange interpolation functions of degree $p$ are only of class $C^0$ but the B-spline functions of degree $p$ are of class $C^ {p-1}$.
\item The Lagrange interpolation functions can be given negative values, but the B-splines
functions are always positive, so all the components of the stiffness matrix in the B-spline method are always positive.
\item With the same $N$ elements in the classical elements for degree $p$, one must compute
$N × p+1$ basis functions, but in the B-spline method, one only has to compute $N + p$
basis functions. 
\end{itemize}
\end{remark}

\begin{example}
  Some examples of B-spline basis functions with different degrees $p$ are given in Figure~\ref{bsplines examples}.
  \begin{figure}[ht!]
      \centering
      \includegraphics[width=5cm,height=5cm]{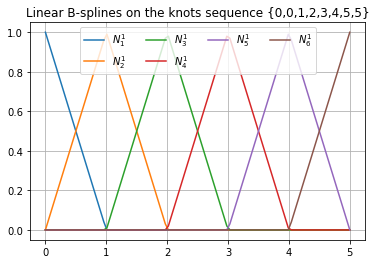}
    \includegraphics[width=5cm,height=5cm]{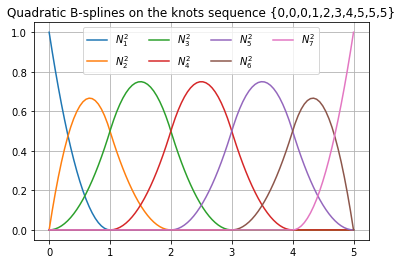}
     \includegraphics[width=5cm,height=5cm]{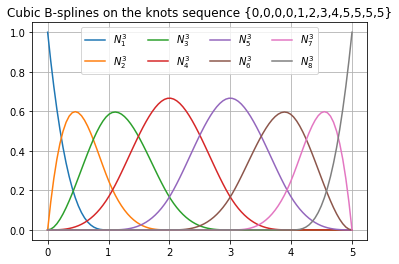}
      \caption{B-spline basis functions of order $p=1,2,3$}
      \label{bsplines examples}
  \end{figure}   
\end{example}

\subsection{Picard iterative method for nonlinear problems}
\subsubsection{The Bratu problem}
Consider the $2D$ Bratu problem (\ref{eq2dbratu}) on a bounded domain $\Omega$ where $f\in L^{2}(\Omega)$:
\begin{equation}
 \begin{cases}
-\text{div}\;(\mathbf{\nabla}u) +\lambda e^{u}& = f, \;\;\;\;\;\;\;  \text{in} \;\; \Omega, \\
\;\;\;\;\;  u &= 0, \;\;\;\;\;\;\; \text{on} \;\;\; \Gamma=\partial \Omega,
\end{cases}
\label{eq2dbratu}
\end{equation}
We multiply both sides of the equation by a test function $v\in V= H_{0}^{1}(\Omega)$ and by using the Green formula \cite{R17} we obtain:
    $$\int_{\Omega }-\Delta u^{n}.v = \int_{\Omega }f v - \lambda \int_{\Omega }e^{u^{n-1}}v,$$
    which gives
    $$ \int_{\Omega }\nabla u^{n}.\nabla v= \int_{\Omega }(f- \lambda e^{u^{n-1} })v.$$
We consider the bilinear form: 
$$a(u^n,v)= \int_{\Omega }\nabla u^{n}.\nabla v,$$
and the linear form:
$$L(v)= \int_{\Omega }(f- \lambda e^{u_{n-1} })v.$$
Then, the associated variational formulation is:
 $$ \begin{cases}
\text{Find} \; u^n \in V= H_{0}^{1}(\Omega) \;\text{such that }   \\
 a(u^n,v)= L(v), \;\text{for all} \; v\in V. \\
  \end{cases}$$
Let $V_{h}=\textit{span}\{B_{i}^{p}/i=2,...N-1\}\subset V$, then the discrete associated weak variational is given by:

$$ \begin{cases}
\text{Find} \; u^n \in V= H_{0}^{1}(\Omega)\;\text{such that }   \\
 a(u_{h}^n,v_{h})= L(v_{h}), \; \text{for all} \; v_{h}\in V_{h} \\
  \end{cases}$$
Picard's iterative scheme implies solving a sequence of linear systems: \\
$$A[U^n]= F[U^{n-1}],$$
for a given initial guess $U^0$, where $A$ is the IGA-Galerkin stiffness matrix, $F$ is the right-hand side, and $U^n$ is a vector of control points.

\subsubsection{The Monge-Ampère equation}

We consider now the Monge-Ampère equation with non-homogeneous Dirichlet boundary conditions~\cite{R8}:

\begin{equation}
 \begin{cases}
\;\;\;\;\text{det}\;(\mathbf{H} (u)) & = f, \;\;\;\;\;\;\;  \text{in} \;\; \Omega, \\
\;\;\;\;  u &= g, \;\;\;\;\;\;\, \text{on} \;\;\; \Gamma=\partial \Omega,\\
\;\;\;\; u \, \text{is convex,}\\
\end{cases}
\label{eq2dMonge_Ampere}
\end{equation}
where $\text{det}\;(\mathbf{H} (u))$ denotes the determinant of the Hessian ($\mathbf{H} (u)$) of the function $u$. In this case, $\Omega \subset \mathbb{R}^{n}$ is a bounded domain with boundary $\Gamma=\partial \Omega$, and $f: \Omega \rightarrow \mathbb{R}$ is strictly positive and regular function.\\
If we define the operator $T$ by:
\[
\setlength\arraycolsep{0pt}
T\colon \begin{array}[t]{ >{\displaystyle}r >{{}}c<{{}}  >{\displaystyle}l } 
         \mathbb{H}^{2} (\Omega)  &\longrightarrow & \mathbb{H}^{2} (\Omega)\\ 
          u &\mapsto& T[u]= \Delta ^{-1} \left( (\Delta u)^{d} + d! (f - \text{det} (\mathbf{H} (u)) )   \right) ^{\frac{1}{d}},
         \end{array}
\]
then, Benamou Froese and Oberman~\cite{R8} prove that the solution $u$ of Monge–Ampère equation (\ref{eq2dMonge_Ampere}) is a fixed point of $T$.\\
We can rewrite (\ref{eq2dMonge_Ampere}) as follows:

\begin{equation}
 \begin{cases}
\;\;\;\; -\Delta u & = - \mathcal{G}  (u), \;\;\;\;\;\;\;  \text{in} \;\; \Omega, \\
\;\;\;\;  u &= g, \;\;\;\;\;\;\;\;\;\;\;\;\,\,\,\,\, \text{on} \;\;\; \Gamma=\partial \Omega,\\
\;\;\;\; u\, \text{is convex,}\\
\end{cases}
\label{eq2dMonge_Ampere_2}
\end{equation}
where $\mathcal{G}  (u) = \left( (\Delta u)^{d} + d! (f - \text{det} (\mathbf{H} (u)) )   \right) ^{\frac{1}{d}}. $ $d$ is the dimension space.\\     
Therefore, the solution is provided by the Picard iterative method as follows (Algorithm \ref{algo6}):

\begin{algorithm}
\caption{Picard Algorithm for Monge–Ampère equation}\label{algo6}
\begin{algorithmic}[1]
\Require $\text{Given an initial guess} \,u^{0}$ \text{and a fixed tolerance; tol}
\Ensure $u^{n_{\text{final}}}$
\State \text{Determine} $u ^{n}$ \text{as the solution to} 
\begin{equation}
 \begin{cases}
\;\;\;\; -\Delta u ^{n}  & = - \mathcal{G}  (u^{n-1}), \;\;\;\;\;\;\;  \text{in} \;\; \Omega, \\
\;\;\;\;  u ^{n} &= g, \;\;\;\;\;\;\;\;\;\;\;\;\;\;\; \,\,\,\,\,\,\,\, \, \text{on} \;\;\; \Gamma=\partial \Omega,\\
\;\;\;\; u ^{n}\, \text{is convex,}\\
\end{cases}
\label{eq2dMonge_Ampere_2}
\end{equation}
\State \text{Repeat until the} $L_2$ \text{norm of the residual of Picard satisfies:} 
\begin{equation*}
 \Vert u ^{n} - u ^{n-1}    \Vert _{L_{2}(\Omega)} \leq \text{tol}.
\end{equation*}
\end{algorithmic}
\end{algorithm}

\subsubsection{Fixed-point iterations}
The Picard sequence of the $2D$ Bratu problem (\ref{eq2dbratu}) is defined by:
$$u^{n+1}:= 	(-\Delta)^{-1} (f-\lambda e^{u^n} ),$$
and the Picard sequence of the $2D$ Monge–Ampère equation (\ref{eq2dMonge_Ampere}) is as follows:

$$u^{n+1}:= \Delta ^{-1} \left( (\Delta u^n)^{2} + 2 (f - \text{det} (\mathbf{H} (u^n)) )   \right) ^{\frac{1}{2}}.$$
The iterations $ (u^n )_{n}$, which are produced by the Picard iterative scheme, are used to construct the sequence $ (s_k )_{k}$ (\ref{fixed-point-extrapol}), where $s_k=u^k $, which is used in the extrapolation step to speed up the convergence of the Picard iterations.

\section{Vector extrapolation methods}\label{sec3}
In order to apply several vector extrapolation techniques to geometric multigrid approaches in the context of isogeometric analysis for nonlinear problem solutions, we will first go over these techniques in this section.
\subsection{Description of polynomial extrapolation methods}

Let $(s_k)_{k\in \mathbb{N}}$ be a sequence of vectors of $\mathbb{R}^{\mathbb{N}}$, produced by a fixed-point iteration, starting with a vector $s_0$.
\begin{equation}
    s_{k+1}= G(s_{k}),\; k=0,1,2,...
    \label{fixed-point-extrapol}
\end{equation}
which solution is denoted by $s$. We consider the transformation $T_k$ defined by:
\[
\setlength\arraycolsep{0pt}
T_k\colon \begin{array}[t]{ >{\displaystyle}r >{{}}c<{{}}  >{\displaystyle}l } 
         \mathbb{R}^{\mathbb{N}}  &\to& \mathbb{R}^{\mathbb{N}}\\ 
          s_k &\mapsto& t_{k,q},
         \end{array}
\]
with
$$t_{k,q}= \sum_{j=0}^{q} \gamma_{j}^{(q)}s_{k+j},$$
$t_{k,q}$ is the approximation produced by the reduced rank or by the minimal polynomial extrapolation methods of the limit or the anti-limit of $s_k$ as $k\rightarrow \infty$, where 
$$ \sum_{j=0}^{q} \gamma_{j}^{(q)}=1 \;\; \text{and}\; \sum_{j=0}^{q}\eta_{i,j} \gamma_{j}^{(q)}=0,\; i=0,...,q-1,$$
with the scalars $\eta_{i,j}$ defined by :
$$\eta_{i,j}= \begin{cases}
(\Delta^{2}s_{k+i},\Delta ^{2}s_{k+j}),& \text{for the RRE},   \\
 (\Delta s_{k+i},\Delta s_{k+j}),& \text{for the MPE.}  \\
  \end{cases}$$
$\Delta s_{k}$\, and $\Delta^{2}s_{k}$ denotes respectively the first and second forward differences of $s_k$ and are defined by:\\

\begin{equation*}
\begin{array}{rcl}
\Delta s_{k} & = & s_{k+1}- s{_k},\;\;\;\;\;\;\; \;\;k=0,1,...\\
\Delta ^ 2 s_{k}& = &  \Delta s_{k+1}-\Delta s_{k} ,\;\;\;k=0,1,...
\end{array}
\end{equation*}
Let us introduce the matrices:
$$ \Delta ^{i}S_{k,q}=[\Delta ^{i}s_{k},...,\Delta ^{i}s_{k+q-1}], \,i=1,2.$$
Using Schur’s formula, the approximation $t_{k,q}$ can be written in matrix form as 
$$t_{k,q}= s_{k}-\Delta S_{k,q}\left({ Y_{q}}^{T} \Delta^{2} S_{k,q} \right)^{-1} { Y_{q}}^{T}\Delta s_{k},$$
where 
$$Y_q= \begin{cases}
\Delta^2 S_{k,q},& \text{for the RRE},   \\
 \Delta S_{k,q},& \text{for the MPE.}  \\
  \end{cases}$$
In particular, for the RRE, the extrapolated approximation $t_{k,q}$ is given by:
$$t_{k,q}= s_{k}-\Delta S_{k,q}\Delta^{2} S_{k,q}^{+} \Delta s_{k},$$
where $\Delta^{2} S_{k,q}^{+}$ is the Moore-Penrose generalized inverse of $\Delta^{2} S_{k,q}$ defined by:
$$ \Delta^{2} S_{k,q}^{+}=\left(\Delta^{2} S_{k,q}^{T} \Delta^{2} S_{k,q} \right)^{-1} \Delta^{2} S_{k,q}^{T}. $$
Let $\tilde{T_k}$ be the new transformation from $T_k$ given by: 
\[
\setlength\arraycolsep{0pt}
\tilde{T_k}\colon \begin{array}[t]{ >{\displaystyle}r >{{}}c<{{}}  >{\displaystyle}l } 
         \mathbb{R}^{\mathbb{N}}  &\to& \mathbb{R}^{\mathbb{N}}\\ 
          s_k &\mapsto& \tilde{t}_{k,q},
         \end{array}
\]
with 
$$\tilde{t}_{k,q}= \sum_{j=0}^{q} \gamma_{j}^{(q)}s_{k+j+1},$$
$\tilde{t}_{k,q}$ is a new approximation of the limit or the anti-limit of $s_k, k\rightarrow \infty$.
Therefore, the generalized residual of $t_{k,q}$ has been defined as follows:

\begin{equation*}
\begin{array}{rcl}
\tilde{r}(t_{k,q}) & = & \tilde{t}_{k,q}- t_{k,q} \\
& = &  \Delta s_{k}-\Delta^{2} S_{k,q}\left({ Y_{q}}^{T} \Delta^{2} S_{k,q} \right)^{-1} { Y_{q}}^{T}\Delta s_{k}.
\end{array}
\end{equation*}

\subsection{Algorithms of polynomial extrapolation methods}
The reduced rank extrapolation (RRE) and the minimal polynomial extrapolation (MPE) methods \cite{R18,R30,R31} are explained in the following by Algorithm~\ref{algo2} and Algorithm~\ref{algo3}, respectively.
\begin{algorithm}
\caption{The RRE method}\label{algo2}
\begin{algorithmic}[1]
\Require $\text{Vectors} \,s_{k}, s_{k+1},...,s_{k+q+1}$
\Ensure $t_{k,q}^{\text{RRE}}: \text{the RRE extrapolated  approximation} $
\State $ \text{Compute}\, \Delta s_{i}= s_{i+1}-s_{i},\,\text{for} \, i=k,k+1,...,k+q$
\State $\text{Set } \Delta S_{q+1}=[\Delta s_{k},\Delta s_{k+1},...,\Delta s_{k+q}]$
\State $\text{Compute the QR factorization of}\, \Delta S_{q+1}, \text{namely}, \,\Delta S_{q+1}=Q_{q+1}R_{q+1}$
\State $ \text{Solve the linear system}\, R_{q+1}^{T}R_{q+1}d=e, \,\text{where} \,d=[d_0,...,d_q]^{T} \,\text{and}\, e=[1,...,1]^{T}$
\State $ \text{Set}\, \lambda=(\sum_{i=0}^{q}d_{i})^{-1}\, \text{and } \,\gamma_{j}= \lambda d_{i}, \,\text{for} \,i=0,...,q$
\State $ \text{Compute } \alpha=[\alpha_0,\alpha_1,...,\alpha_{q-1}]^{T}\, \text{where}\, \alpha_0=1-\gamma_0 \,\text{and } \,\alpha_{j}=\alpha_{j-1}-\gamma_{j} \,\text{for} \,j=1,...,q-1$
\State $ \text{Compute}\, t_{k,q}^{\text{RRE}}=s_k+ Q_{q}(R_{q}\alpha)$
\end{algorithmic}
\end{algorithm}

\begin{algorithm}
\caption{The MPE method}\label{algo3}
\begin{algorithmic}[1]
\Require $\text{Vectors} \,s_{k}, s_{k+1},...,s_{k+q+1}$
\Ensure $t_{k,q}^{\text{MPE}}: \text{the MPE extrapolated  approximation} $
\State $ \text{Compute}\, \Delta s_{i}= s_{i+1}-s_{i},\,\text{for} \, i=k,k+1,...,k+q$
\State $\text{Set } \Delta S_{q+1}=[\Delta s_{k},\Delta s_{k+1},...,\Delta s_{k+q}]$
\State $\text{Compute the QR factorization of}\, \Delta S_{q+1}, \text{namely}, \,\Delta S_{q+1}=Q_{q+1}R_{q+1}$
\State $ \text{Solve the upper triangular linear system}\, R_{q}d =-r_q, \,\text{where}\, r_{q}=[r_{0q},r_{1q},...,r_{q-1,q}]^{T} $
\State $ \text{Set}\,d_q=1 \,\text{and compute}\, \lambda=(\sum_{i=0}^{q}d_{i})^{-1}\, \text{and set} \,\gamma_{j}= \lambda d_{i}, \,\text{for} \,i=0,...,q$
\State $ \text{Compute } \alpha=[\alpha_0,\alpha_1,...,\alpha_{q-1}]^{T}\, \text{where}\, \alpha_0=1-\gamma_0 \,\text{and } \,\alpha_{j}=\alpha_{j-1}-\gamma_{j} \,\text{for} \,j=1,...,q-1$
\State $ \text{Compute}\, t_{k,q}^{\text{MPE}}=s_k+ Q_{q}(R_{q}\alpha)$
\end{algorithmic}
\end{algorithm}

\subsection{Quadratic convergence results}
We consider the linear or the nonlinear system of equations defined by:
\begin{equation}
\begin{array}{rcl}
F(x) & = & 0, \;\;\;\;\;\; F: \mathbb{R^N}\longrightarrow \mathbb{R^N},\label{eq3}
\end{array}
\end{equation} 
where $x^*$ is the exact solution of (\ref{eq3}). This equation can be expressed as 
\begin{equation}
\begin{array}{rcl}
G(x) & = & x, \;\;\;\;\;\; G: \mathbb{R^N}\longrightarrow \mathbb{R^N}.
\end{array}
\end{equation} 
Let $s_0$ be an initial vector of $\mathbb{R^N}$. the vector sequences $(s_k)_{k\in \mathbb{N}}$  of $\mathbb{R}^{\mathbb{N}}$, produced by the fixed point iteration, are given by:
\begin{equation}
s_{k+1} = G(s_{k}), k=0,1,2,... \label{eq5}
\end{equation} 
The algorithm which will be considered is the following:
\begin{itemize}
    \item Choose a starting point $x_0$,
    \item at the iteration $k$, we set $s_o = x_k $ and $s_{i+1} = G(s_i)$ for $i = 0,. . . , d_k$,
where $d_k$, is the degree of the minimal polynomial of $G’(x ^ *)$ for the vector $x_k -x^*$,
\item compute $x_{k+1}$ such that $x_{k+1} = t_{0,d_k}$, using one of the desired algorithms (\ref{algo2},\ref{algo3}).
\end{itemize}
Therefore, $x_{k+1}$ and the generalized residual $\tilde{r}(x_{k+1})$ of $x_{k+1}$ are given, respectively, by:
\begin{align}
x_{k+1}=t_{0,d_k}= s_{0}-\Delta S_{d_k}\left({ Y_{d_k}}^{T} \Delta^{2} S_{d_k} \right)^{-1} { Y_{d_k}}^{T}\Delta s_{0}, \label{eq6}
\end{align}
and
\begin{align}
   \tilde{r}(x_{k+1})  = \Delta s_{0}-\Delta^{2} S_{d_k}\left({ Y_{d_k}}^{T} \Delta^{2} S_{d_k} \right)^{-1} { Y_{d_k}}^{T}\Delta s_{0}.  \label{eq7}
\end{align}

We first provide a new estimation of the generalized residual norm for the RRE method in the following Theorem \ref{theom1}, which develops on an earlier finding in~\cite{R34}.

 \begin{theorem}\label{theom1}
Consider the $N \times (d_k + 1)$ matrix $\tilde{\Delta} S_{{d_k}+1} $, which is given by $\left [ \Delta s_0, \Delta^{2} S_{d_k}\right ]$. If  $\tilde{\Delta} S_{{d_k}+1}$ is of full rank, then
$$ {\Vert \tilde{r}(x_{k+1}) \Vert}^2_2 = \frac{1}{ e^{T}( \Delta S_{{d_k}+1}^T \Delta S_{{d_k}+1} )^{-1} e},$$ 
where $\Delta S_{{d_k}+1}= \left [ \Delta s_0, \Delta s_{1},..., \Delta s_{d_k} \right ],$ and $e=(1,\ldots,1)^{T}$ is a vector of ones in $\mathbb{R}^{d_k+1}$.
\end{theorem}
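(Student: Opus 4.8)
\section*{Proof proposal}

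The plan is to exploit that, for the RRE method, one has $Y_{d_k}=\Delta^{2}S_{d_k}$, so the generalized residual in~(\ref{eq7}) is precisely the least\-squares residual of $\Delta s_0$ against the columns of $\Delta^{2}S_{d_k}$; its squared norm is then a constrained minimum that I can evaluate in closed form. Concretely, I would first rewrite~(\ref{eq7}) as
\[
\tilde{r}(x_{k+1}) = \Bigl(I - \Delta^{2}S_{d_k}\bigl((\Delta^{2}S_{d_k})^{T}\Delta^{2}S_{d_k}\bigr)^{-1}(\Delta^{2}S_{d_k})^{T}\Bigr)\Delta s_0 = (I-P)\,\Delta s_0,
\]
where $P$ is the orthogonal projector onto $\mathrm{range}(\Delta^{2}S_{d_k})$. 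Since $(I-P)\Delta s_0$ is exactly the residual of the least-squares problem, this already gives $\Vert \tilde{r}(x_{k+1})\Vert_2 = \min_{c}\Vert \Delta s_0 - \Delta^{2}S_{d_k}\,c\Vert_2$.

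Second, I would pass from the second\-difference basis to the first\-difference basis. Writing $\Delta^{2}s_{l}=\Delta s_{l+1}-\Delta s_{l}$ yields $\Delta^{2}S_{d_k}=\Delta S_{d_k+1}D$ and $\Delta s_0 = \Delta S_{d_k+1}e_1$, where $D$ is the $(d_k+1)\times d_k$ bidiagonal difference matrix whose $l$-th column is $e_{l+1}-e_{l}$. Each column of $D$ sums to zero, so $e^{T}D=0$; moreover its $d_k$ columns are independent and lie in $e^{\perp}$, hence span $e^{\perp}$. Setting $\gamma=e_1-Dc$, the constraint $e^{T}\gamma=1$ holds automatically, $\Delta s_0-\Delta^{2}S_{d_k}c=\Delta S_{d_k+1}\gamma$, and as $c$ ranges over $\mathbb{R}^{d_k}$ the vector $\gamma$ sweeps out the \emph{entire} affine hyperplane $\{\gamma:e^{T}\gamma=1\}$. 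Therefore
\[
\Vert \tilde{r}(x_{k+1})\Vert_2^{2}=\min_{e^{T}\gamma=1}\gamma^{T}G\,\gamma,\qquad G:=\Delta S_{d_k+1}^{T}\Delta S_{d_k+1}.
\]
The full\-rank hypothesis enters here through the factorization $\tilde{\Delta}S_{d_k+1}=[\Delta s_0,\Delta^{2}S_{d_k}]=\Delta S_{d_k+1}[\,e_1\mid D\,]$, in which $M:=[\,e_1\mid D\,]$ is upper bidiagonal with $\det M=1$; hence $\tilde{\Delta}S_{d_k+1}$ full rank forces $\Delta S_{d_k+1}$ full rank, so $G$ is symmetric positive definite and the constrained problem is well posed.

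Finally I would solve the constrained quadratic minimization by Lagrange multipliers: stationarity of $\gamma^{T}G\gamma-2\lambda(e^{T}\gamma-1)$ gives $\gamma=\lambda G^{-1}e$, the constraint $e^{T}\gamma=1$ fixes $\lambda=(e^{T}G^{-1}e)^{-1}$, and substituting back gives the optimal value $\gamma^{T}G\gamma=\lambda^{2}e^{T}G^{-1}e=\lambda=1/(e^{T}G^{-1}e)$, which is exactly the claimed identity $\Vert \tilde{r}(x_{k+1})\Vert_2^{2}=1/\bigl(e^{T}(\Delta S_{d_k+1}^{T}\Delta S_{d_k+1})^{-1}e\bigr)$.

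The step I expect to be the main obstacle is the reduction in the second paragraph, namely verifying that $\gamma=e_1-Dc$ is a genuine \emph{bijection} between $\mathbb{R}^{d_k}$ and the constraint hyperplane $\{e^{T}\gamma=1\}$ (equivalently, that the column space of $D$ is all of $e^{\perp}$). This is what guarantees that the unconstrained least-squares minimum over $c$ coincides with the sum-to-one constrained minimum over $\gamma$, so that no minimizer is lost or spuriously introduced when changing variables; once this identification is secured, the projection identity and the Lagrange computation are entirely routine.
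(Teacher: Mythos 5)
Your proposal is correct, and it reaches the identity by a genuinely different mechanism than the paper. Both arguments start the same way (for RRE, $\tilde{r}(x_{k+1})=(I-P)\Delta s_0$ with $P$ the orthogonal projector onto $\mathrm{range}(\Delta^{2}S_{d_k})$), and both use the same bidiagonal change-of-basis matrix: your $M=[\,e_1\mid D\,]$ is exactly the paper's $W$ in $\tilde{\Delta}S_{d_k+1}=\Delta S_{d_k+1}W$. But from there the paper proceeds algebraically: it recognizes $\Vert\tilde{r}(x_{k+1})\Vert_2^2$ as the Schur complement of the bordered Gram matrix $\tilde{\Delta}S_{d_k+1}^{T}\tilde{\Delta}S_{d_k+1}$, performs a block UL factorization, takes determinants to get $\Vert\tilde{r}(x_{k+1})\Vert_2^2=\det(\tilde{\Delta}S^{T}\tilde{\Delta}S)/\det(\Delta^{2}S^{T}\Delta^{2}S)=1/\bigl(e_1^{T}(\tilde{\Delta}S^{T}\tilde{\Delta}S)^{-1}e_1\bigr)$, and only then converts $e_1$ into $e$ via $e_1^{T}W^{-1}=e^{T}$. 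You instead use the variational characterization of the projection residual, push the sum-to-zero structure of $D$ through the change of variables $\gamma=e_1-Dc$ to obtain $\min_{e^{T}\gamma=1}\gamma^{T}G\gamma$ with $G=\Delta S_{d_k+1}^{T}\Delta S_{d_k+1}$, and finish with Lagrange multipliers. The step you flagged as the potential obstacle is in fact airtight as you argued it: the $d_k$ columns of $D$ are linearly independent vectors lying in the $d_k$-dimensional subspace $e^{\perp}$, hence span it, so $c\mapsto e_1-Dc$ is a bijection onto the hyperplane $\{e^{T}\gamma=1\}$ and no minimizer is lost or gained. What each approach buys: the paper's determinant route yields as a byproduct the intermediate identity for the $(1,1)$ entry of the inverse bordered Gram matrix, whereas your route makes explicit the variational meaning of the RRE generalized residual (minimal $\Vert\Delta S_{d_k+1}\gamma\Vert_2$ over affine weights summing to one), which connects more transparently to the remark following the theorem, where the algorithm recovers $\Vert\tilde{r}(x_{k+1})\Vert_2^2=\lambda=(e^{T}d)^{-1}$ from the linear system $R^{T}Rd=e$ — your Lagrange computation is essentially that observation in disguise.
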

 
\begin{proof}
We have for the RRE method:
\begin{equation*}
\begin{array}{rcl}
\tilde{r}(x_{k+1})  
 & = &\Delta s_{0}-\Delta^{2} S_{d_k} \Delta^{2}S_{d_k} ^{+} \Delta s_{0} \\ 
 & = & (I- \Delta^{2} S_{d_k} \Delta^{2} S_{d_k}^{+} ) \Delta s_{0}.\\ 
\end{array}
\end{equation*}  
If we define the orthogonal projector $P_{d_k}$ by $P_{d_k}=I- \Delta^{2} S_{d_k} \Delta^{2} S_{d_k}^{+}$, we have on the one hand,\\ $\tilde{r}(x_{k+1})= P_{d_k}\Delta s_{0}.$ On the other hand, given that $P_{d_k}^2 = P_{d_k}$ and $P_{d_k}^T = P_{d_k}$, we obtain

\begin{equation}
\begin{array}{rcl}
\Vert \tilde{r}(x_{k+1}) \Vert^2_2 & = &  \langle  \; P_{d_k}\Delta s_{0} , \; P_{d_k}\Delta s_{0} \rangle \\
 & = & \langle  \; \Delta s_{0} , \; P_{d_k}\Delta s_{0} \rangle \\ 
 & = & \langle  \; \Delta s_{0} , \; \Delta s_{0} \rangle - \langle  \; \Delta s_{0} , \; \Delta^{2} S_{d_k} \Delta^{2} S_{d_k}^{+}\Delta s_{0} \rangle  \\ 
  & = & \langle  \; \Delta s_{0} , \; \Delta s_{0} \rangle - \langle  \; \Delta s_{0} , \; \Delta^{2} S_{d_k} \left({ \Delta^{2} S_{d_k}}^{T} \Delta^{2} S_{d_k} \right)^{-1} { \Delta^{2} S_{d_k}}^{T} \Delta s_{0} \rangle .   \label{eq11}\\
\end{array} 
\end{equation}  
We observe that the right-hand side of (\ref{eq11}) is a Schur complement for the matrix
\[
\tilde{ \Delta} S_{{d_k}+1} ^{T}\tilde{ \Delta} S_{{d_k}+1}= \left( \begin{array}{cc}
\Delta s_{0}^{T} \Delta s_{0} & \quad \Delta s_{0}^{T} \Delta^{2} S_{d_k} \\[1em]
\quad \Delta^{2} S_{d_k}^{T} \Delta s_{0} & \quad \Delta^{2} S_{d_k}^{T} \Delta^{2} S_{d_k} \\
\end{array} \right).
\]
%Cramer's rule provides
%$$ \Vert \tilde{r}(x_{k+1}) \Vert^2_2= \frac{\text{det} (\tilde{ %\Delta} S_{{d_k}+1} ^{T}\tilde{ \Delta} S_{{d_k}+1})   }{\text{det}%( \Delta^{2} S_{d_k}^{T} \Delta^{2} S_{d_k} )}, $$
%where $\text{det}(M)$ denotes the determinant of the matrix $M$.
Block UL factorization allows us to factor this matrix into a product of block upper and block lower triangular matrices:
\[
\tilde{ \Delta} S_{{d_k}+1} ^{T}\tilde{ \Delta} S_{{d_k}+1}= \left( \begin{array}{cc}
1 & \quad \Delta s_{0}^{T} \Delta^{2} S_{d_k} (\Delta^{2} S_{d_k}^{T} \Delta^{2} S_{d_k})^{-1}  \\[1em]
\quad 0 & \quad I \\
\end{array} \right) \left( \begin{array}{cc}
\Vert \tilde{r}(x_{k+1}) \Vert^2_2 & \quad 0  \\[1em]
\quad \Delta^{2} S_{d_k}^{T} \Delta s_{0} & \quad \Delta^{2} S_{d_k}^{T} \Delta^{2} S_{d_k} \\
\end{array} \right).  
\]
We denote by $\text{det}(M)$ the determinant of the matrix $M$. Since the matrices $\Delta^{2} S_{d_k}^{T} \Delta^{2} S_{d_k}$ and $\tilde{ \Delta} S_{{d_k}+1} ^{T}\tilde{ \Delta} S_{{d_k}+1}$ are nonsingular, by taking determinants on both sides, the following result holds 
$$ \Vert \tilde{r}(x_{k+1}) \Vert^2_2= \frac{\text{det} (\tilde{ \Delta} S_{{d_k}+1} ^{T}\tilde{ \Delta} S_{{d_k}+1})   }{\text{det}( \Delta^{2} S_{d_k}^{T} \Delta^{2} S_{d_k} )} =  \frac{1}{ e_1^{T}(\tilde{ \Delta} S_{{d_k}+1}^T \tilde{ \Delta} S_{{d_k}+1} )^{-1}e_1 }.$$
By definition:
%$\tilde{\Delta} S_{{d_k}+1}=\left [ \Delta s_0, \Delta^{2} S_{d_k}\right ]$, where the matrix $\Delta^{2} S_{d_k}$ contains on its columns the second forward differences of $s_k$ and are defined by: $\Delta ^ 2 s_{k}& = &  \Delta s_{k+1}-\Delta s_{k} ,\;k=0,1,...,d_k-1.$ Therefore, 
\begin{equation*}
\begin{array}{rcl}
\tilde{ \Delta} S_{{d_k}+1}  & = &\left [ \Delta s_0, \Delta^{2} S_{d_k}\right ]\\
 & = &\left [ \Delta s_0, \Delta ^ 2 s_{0},\Delta ^ 2 s_{1},...,\Delta ^ 2 s_{d_k-1}\right ] \\ 
 & = & \left [ \Delta s_0, \Delta s_{1}- \Delta s_{0}, \Delta s_{2}- \Delta s_{1},..., \Delta s_{d_k}-\Delta s_{d_k-1}    \right ]\\ 
& = & \Delta S_{{d_k}+1} W,
\end{array}
\end{equation*}
where the $(d_k + 1) \times (d_k + 1)$ matrix $W$ is defined by 
\[
W = \begin{bmatrix}
1 & -1 & 0 & 0 & \cdots & 0 & 0 \\
0 & 1 & -1 & 0 & \cdots & 0 & 0 \\
0 & 0 & 1 & -1 & \cdots & 0 & 0 \\
0 & 0 & 0 & 1 & \cdots & 0 & 0 \\
\vdots & \vdots & \vdots & \vdots & \ddots & \vdots & \vdots \\
0 & 0 & 0 & 0 & \cdots & 1 & -1 \\
0 & 0 & 0 & 0 & \cdots & 0 & 1 
\end{bmatrix},
\]
%with this notation 
%$$  \tilde{ \Delta} S_{{d_k}+1} = \Delta S_{{d_k}+1} W.$$
%where $\Delta S_{{d_k}+1}= \left [ \Delta s_0, \Delta s_{1}, \Delta s_{2},..., \Delta s_{d_k} \right ].$ 
Since $W$ is an upper triangular matrix with all diagonal elements equal to $1$, hence $W$ is nonsingular. Moreover, $W^{-1}$ have the form:
%on the one hand, the inverse matrix $W^{-1}$
%will also be an upper triangular, moreover, it will
%
\[ W^{-1} = \begin{bmatrix}
1 & 1 & 1 & 1 & \cdots & 1 & 1 \\
0 & 1 & 1 & 1 & \cdots & 1 & 1 \\
0 & 0 & 1 & 1 & \cdots & 1 & 1 \\
0 & 0 & 0 & 1 & \cdots & 1 & 1 \\
\vdots & \vdots & \vdots & \vdots & \ddots & \vdots & \vdots \\
0 & 0 & 0 & 0 & \cdots & 1 & 1 \\
0 & 0 & 0 & 0 & \cdots & 0 & 1 
\end{bmatrix}.\]
%\[ W^{-1} = \begin{bmatrix}
%1 & 1 & 1 & \cdots & 1 \\
%0 & 1 & 1 & \cdots & 1 \\
%0 & 0 & 1 & \cdots & 1 \\
%\vdots & \vdots & \vdots & \ddots & \vdots \\
%0 & 0 & 0 & \cdots & 1
%\end{bmatrix} \]
%on the other hand, 
From $ \tilde{ \Delta} S_{{d_k}+1} = \Delta S_{{d_k}+1} W,$ we have:
%$$   \tilde{ \Delta} S_{{d_k}+1} ^{T}\tilde{ \Delta} S_{{d_k}+1}= W^{T} (\Delta S_{{d_k}+1}^{T} \Delta S_{{d_k}+1} ) W,$$
%which implies that:
$$ e_1^{T}(\tilde{ \Delta} S_{{d_k}+1}^T \tilde{ \Delta} S_{{d_k}+1} )^{-1}e_1 = e_1^{T}  W^{-1}  (\Delta S_{{d_k}+1}^{T} \Delta S_{{d_k}+1} )^{-1}  (W^{T})^{-1} e_1.$$
Since, $e_1^T W^{-1}=e^T$ and $(W^{T})^{-1} e_1=e,$ \\
%\[
%e_1^T W^{-1} = (1, 0, 0,0, \ldots,0, 0)   \begin{bmatrix}
%1 & 1 & 1 & 1 & \cdots & 1 & 1 \\
%0 & 1 & 1 & 1 & \cdots & 1 & 1 \\
%0 & 0 & 1 & 1 & \cdots & 1 & 1 \\
%0 & 0 & 0 & 1 & \cdots & 1 & 1 \\
%\vdots & \vdots & \vdots & \vdots & \ddots & \vdots & \vdots \\
%0 & 0 & 0 & 0 & \cdots & 1 & 1 \\
%0 & 0 & 0 & 0 & \cdots & 0 & 1 
%\end{bmatrix}= (1, 1, 1,1, \ldots,1, 1) = e^T.
%\]
%thus, $e_1^T W^{-1}= e^{T}$ and $(W^{T})^{-1} e_1=e.$ 
%Consequently
%$$ e_1^{T}  W^{-1}  (\Delta S_{{d_k}+1}^{T} \Delta S_{{d_k}+1} )^{-1}  (W^{T})^{-1} e_1= e^{T} (\Delta S_{{d_k}+1}^{T} \Delta S_{{d_k}+1} )^{-1} e,$$
we conclude that 
$$ {\Vert \tilde{r}(x_{k+1}) \Vert}^2_2 = \frac{1}{ e_1^{T}(\tilde{ \Delta} S_{{d_k}+1}^T \tilde{ \Delta} S_{{d_k}+1} )^{-1}e_1 } = \frac{1}{e^{T} (\Delta S_{{d_k}+1}^{T} \Delta S_{{d_k}+1} )^{-1} e}.$$ 
%The formula generates $\Vert \tilde{r}(t_{q}) \Vert^2_2$ as the ratio of two determinants, namely, from the block UL factorization of the matrix $\tilde{ \Delta} S_{{q}+1} ^{T}\tilde{ \Delta} S_{{q}+1}$ and by taking determinants on both sides.
\end{proof}

\begin{remark}
In practice, the new approximation of the generalized residual for the RRE method in Theorem \ref{theom1} plays a very crucial role, as it indicates that we are able to recover the 2-norm of the generalized residual before computing the extrapolated approximation in the Algorithm \ref{algo2} (step $7$) which will allow us to reduce the computational cost of the algorithm; assuming that the matrix $\Delta S_{{d_k}+1}$ is of full rank. Then we can determine a QR factorization $\Delta S_{d_k+1}= Q_{d_k+1} R_{d_k+1}$, where $Q_{d_k+1} = [q_0,q_1,... ,q_{d_k}] \in \mathbb{R}^{N\times (d_k+1)}$
 has orthonormal columns, and $R_{d_k+1} \in \mathbb{R}^{(d_k+1)\times (d_k+1)}$  is upper triangular with positive diagonal
entries. Hence, the norm of the generalized residual can be expressed as
$$ {\Vert \tilde{r}(x_{k+1}) \Vert}^2_2 = \frac{1}{ e^{T}( R_{d_k+1}^T R_{d_k+1} )^{-1} e}.$$ 
Since in steps $4$ and $5$ of Algorithm \ref{algo2} we need to solve the linear system $R_{d_k+1}^{T}R_{d_k+1}d=e$, where $d=[d_0,...,d_{d_k}]^{T}$, and if we set $\lambda=(\sum_{i=0}^{d_k}d_{i})^{-1}$, we obtain that 
$$ {\Vert \tilde{r}(x_{k+1}) \Vert}^2_2 = \frac{1}{ e^{T} d}= \lambda.$$
\end{remark}
$$$$

Skelboe \cite{R35} provided arguments for the quadratic convergence of ${(x_k)}_k$ to $x^*$ for the MPE, whereas Beuneu \cite{R36} provided arguments for a certain class of extrapolation techniques. Smith, Ford, and Sidi \cite{R37} have identified a weakness in Skelboe's evidence. Sadok and Jbilou have discovered an identical hole in Beuneu's proof. Moreover, in \cite{R38}, the authors give a sufficient condition, complete, and satisfactory proof of the quadratic convergence.
%Before the main result, we first give the following proposition: 

%\begin{Proposition}\label{propo}
%Consider the $N \times (d_k + 1)$ matrix $\tilde{\Delta} S_{{d_k}+1} $, which is given by $\left [ \Delta s_0, \Delta^{2} S_{d_k}\right ]$. If  $\tilde{\Delta} S_{{d_k}+1}$ has a full rank, then
%$$ {\Vert \tilde{r}(x_{k+1}) \Vert}^2_2 = \frac{1}{ \text{e}^{T}( \Delta S_{{d_k}+1}^T \Delta S_{{d_k}+1} )^{-1}\text{e} } $$ 
% \end{Proposition}
 
%  \begin{proof}

% \end{proof}
$$$$
We set $G’( x^*) = J $ and assume that $G$ satisfies the following conditions:

\begin{itemize}
    \item [i)] The matrix $J-I$ is regular. We set $M= \Vert(J-I)^{-1}\Vert$.

    \item[ii)]  The Frechet derivative $G’$ of $G $ satisfies the Lipschitz condition 
    $$ 
    \Vert G'(x)-G'(y) \Vert \leq L \Vert x-y \Vert ,\;\;\; \forall x,y \in D,
    $$
where $D$ is an open and convex subset of $\mathbb{C}^ p$.
\end{itemize}

\begin{theorem}[\cite{R38}]\label{theom2}
If $G$ satisfies i), and ii). Moreover, if 
$$\exists \alpha > 0, \exists K,\;\; \alpha_k(x_k) > \alpha, \forall k\geq K,$$
\text{then there exists a neighbourhood} $U$ \text{of} $x ^*$ \text{such that} $\forall x_0 \in U $

$$ \Vert x_{k+1} -x^* \Vert = O(\Vert x_k - x^*\Vert^2),\; \;\;\;\text{for RRE and MPE.}$$

 \end{theorem}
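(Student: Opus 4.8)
The plan is to reduce the nonlinear statement to the known exactness of RRE and MPE on linear iterations, treating the nonlinearity as a genuinely second-order perturbation. First I would linearize the fixed-point map around $x^*$. Writing $\varepsilon_i = s_i - x^*$ and combining condition ii) with the integral form of the mean value theorem, one gets
$$\varepsilon_{i+1} = J\varepsilon_i + \psi_i, \qquad \Vert \psi_i \Vert \le \tfrac{L}{2}\Vert \varepsilon_i\Vert^2 .$$
Choosing the neighbourhood $U$ small enough, and using that the number of inner iterates $d_k+1$ is bounded by the degree of the minimal polynomial of $J$ (hence by $N$), a short induction keeps all inner iterates inside $D$ and yields $\Vert \varepsilon_i\Vert \le C\Vert\varepsilon_0\Vert$ with $C$ independent of $k$. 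Solving the recursion gives $\varepsilon_i = J^i\varepsilon_0 + \rho_i$ with $\rho_i = \sum_{j=0}^{i-1} J^{i-1-j}\psi_j$, so that $\Vert \rho_i\Vert = O(\Vert\varepsilon_0\Vert^2)$ uniformly in $k$.

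Second, I would isolate the linear part of every quantity entering the extrapolation. Forward differences split as $\Delta s_i = J^i(J-I)\varepsilon_0 + (\rho_{i+1}-\rho_i)$, and similarly for $\Delta^2 s_i$, so each column of $\Delta S_{d_k}$ and $\Delta^2 S_{d_k}$ is a leading term of size $O(\Vert\varepsilon_0\Vert)$ plus a perturbation of size $O(\Vert\varepsilon_0\Vert^2)$. Denote by $\widehat{(\cdot)}$ the quantities built from the purely linear iteration $\widehat\varepsilon_{i+1}=J\widehat\varepsilon_i$ started at the same $\varepsilon_0$. Because $d_k$ is exactly the degree of the minimal polynomial of $J$ for $x_k-x^*$, the defining conditions on the coefficients $\gamma_j^{(d_k)}$ are precisely those annihilating the linear error, so RRE and MPE are exact on the linear problem; that is, the linear extrapolate satisfies $\widehat t_{0,d_k}=x^*$, equivalently $\varepsilon_0 = \Delta\widehat S_{d_k}\,\widehat c$ with $\widehat c = (\widehat Y^{T}\Delta^2\widehat S_{d_k})^{-1}\widehat Y^{T}\Delta\widehat s_0$. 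Keeping $Y$ generic here handles both methods at once, since $Y_q\in\{\Delta S_{d_k},\Delta^2 S_{d_k}\}$.

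Third, I would run a scaled perturbation analysis of the closed-form extrapolate $t_{0,d_k}=s_0-\Delta S_{d_k}\,c$, where $c=(Y^{T}\Delta^2 S_{d_k})^{-1}Y^{T}\Delta s_0$. After factoring $\Vert\varepsilon_0\Vert$ out of every forward difference, the Gram-type matrix $Y^{T}\Delta^2 S_{d_k}$ and the vector $Y^{T}\Delta s_0$ both scale like $\Vert\varepsilon_0\Vert^2$; this is exactly where the hypothesis $\alpha_k(x_k)>\alpha>0$ enters, which I read as a uniform conditioning bound on the (scaled) Gram matrix, giving $\Vert(Y^{T}\Delta^2 S_{d_k})^{-1}\Vert = O(\Vert\varepsilon_0\Vert^{-2})$. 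Standard perturbation of a linear solve with bounded inverse then yields $c=O(1)$ and $c-\widehat c = O(\Vert\varepsilon_0\Vert)$. Writing
$$t_{0,d_k}-x^* = \Delta\widehat S_{d_k}(\widehat c-c) + (\Delta\widehat S_{d_k}-\Delta S_{d_k})\,c,$$
the first term is $O(\Vert\varepsilon_0\Vert)\cdot O(\Vert\varepsilon_0\Vert)$ and the second is $O(\Vert\varepsilon_0\Vert^2)\cdot O(1)$, so $\Vert x_{k+1}-x^*\Vert = O(\Vert\varepsilon_0\Vert^2)=O(\Vert x_k-x^*\Vert^2)$.

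The main obstacle, and where I expect to spend most of the effort, is the third step: making the scaling rigorous and uniform in $k$. One must factor out $\Vert\varepsilon_0\Vert$ cleanly so that the $O(\Vert\varepsilon_0\Vert^2)$-cancellation between $\varepsilon_0=\Delta\widehat S_{d_k}\widehat c$ and $\Delta S_{d_k}c$ is real and is not destroyed by an ill-conditioned inverse; this is precisely what the lower bound $\alpha_k(x_k)>\alpha$ secures, and it is also what guards against the degree $d_k$ jumping as $x_k$ varies. A secondary technical point is to confirm that $d_k$ remains bounded and that all the $O$-constants (arising from $L$, $\Vert J\Vert$, $M$, $\alpha$, and $N$) are independent of $k$, so that the estimate is valid along the entire sequence rather than merely term by term.
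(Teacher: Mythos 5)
The first thing to note is that the paper does not prove this theorem at all: its entire ``proof'' is the citation ``See, for instance, \cite{R38}''; the result is quoted from Jbilou--Sadok as background, precisely to motivate the authors' own Theorem~\ref{theom3}, which is designed to \emph{avoid} the hypothesis on $\alpha_k$ by bounding the generalized residual instead of the error. So your attempt can only be judged against the literature proof, and in spirit it follows the same route: linearize the fixed-point map around $x^*$, exploit the exactness of MPE/RRE on the purely linear sequence when $q=d_k$ equals the degree of the minimal polynomial of $J$ for $x_k-x^*$, and control the nonlinear perturbation through the non-degeneracy hypothesis.

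The genuine weak point of your sketch is the treatment of $\alpha_k$. The quantity $\alpha_k(x_k)$ is never defined (neither in the statement as quoted nor by you), and you resolve this by \emph{declaring} it to mean a uniform conditioning bound on the scaled Gram matrix, i.e. $\Vert (Y^{T}\Delta^{2}S_{d_k})^{-1}\Vert = O(\Vert\varepsilon_0\Vert^{-2})$. That is exactly the estimate your third step needs, so as written the argument is circular at its load-bearing joint: the hypothesis is interpreted as whatever makes the proof go through. In \cite{R38} the quantity $\alpha_k$ has a concrete definition (a measure of uniform linear independence of the normalized difference vectors), and a nontrivial part of the proof is showing that $\alpha_k(x_k)>\alpha$ for all $k\geq K$ actually yields the required inverse bound uniformly in $k$; this is also where one must rule out breakdown of the MPE matrix $\Delta S_{d_k}^{T}\Delta^{2}S_{d_k}$, which, unlike the RRE Gram matrix, is not automatically nonsingular even when $\Delta^{2}S_{d_k}$ has full column rank. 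The remaining pieces of your sketch (the induction keeping the inner iterates in $D$ with $\Vert\varepsilon_i\Vert\le C\Vert\varepsilon_0\Vert$, the splitting $\varepsilon_i=J^{i}\varepsilon_0+O(\Vert\varepsilon_0\Vert^{2})$, the exactness identity $\varepsilon_0=\Delta\widehat S_{d_k}\widehat c$, and the final perturbation estimate giving $O(\Vert\varepsilon_0\Vert^{2})$) are sound and match the standard argument, but they all lean on that inverse bound, which must be established from the actual definition of $\alpha_k$ rather than assumed.
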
 
 
 \begin{proof}
See, for instance, \cite{R38}. 
 \end{proof}
 
Our objective is to demonstrate the quadratic convergence without the need for hypotheses on $\alpha_k$~\cite{R38}. Then, in the following Theorem \ref{theom3}, we prove a new theoretical result on the relation between the residual norm and the error norm for the RRE method.\\
We first give the following lemma:
%provide an improved expression of the generalized residual norm for the RRE method in the following theorem, which develops on an earlier finding by Sadok.

\begin{lem}\label{minimpoly}
Let  $M(\lambda)= \sum_{i=0}^{d_k} \eta_i^{(k)} \lambda ^{i},\,\eta_{d_k}^{(k)}=1, $ be the minimal polynomial of the matrix $J$ with respect to the vector $(x_k - x^*)$, therefore, 
\[
\sum_{i=0}^{d_k} \vert \eta_i^{(k)}\vert \leq M_0.
\]
\end{lem}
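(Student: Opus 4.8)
The plan is to exploit a finiteness phenomenon. Although the coefficients $\eta_i^{(k)}$, and even the degree $d_k$, depend on the iteration index $k$ through the vector $x_k-x^*$, the monic polynomial $M(\lambda)$ can only ever coincide with one of \emph{finitely many} polynomials; a maximum over this finite set then furnishes the uniform constant $M_0$, with no estimate of individual coefficients required. Throughout I assume $x_k\neq x^*$, so that the minimal polynomial of $J$ with respect to $x_k-x^*$ is genuinely defined and $d_k\geq 1$.

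The first step is to recall the defining property: $M(\lambda)$ is the monic polynomial of least degree with $M(J)(x_k-x^*)=0$. Let $\Pi(\lambda)$ denote the (global) minimal polynomial of the matrix $J$, so that $\Pi(J)=0$ and in particular $\Pi(J)(x_k-x^*)=0$. By the division algorithm in the polynomial ring, any polynomial annihilating the vector $x_k-x^*$ is a multiple of its minimal polynomial $M(\lambda)$; applying this to $\Pi$ shows that $M(\lambda)$ divides $\Pi(\lambda)$ for every $k$.

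The second step is to observe that the fixed polynomial $\Pi(\lambda)$ has only finitely many monic divisors. Writing $\Pi(\lambda)=\prod_{j}(\lambda-\lambda_j)^{\mu_j}$ over the distinct eigenvalues $\lambda_j$ of $J$, every monic divisor takes the form $\prod_j(\lambda-\lambda_j)^{\nu_j}$ with $0\leq\nu_j\leq\mu_j$, so their number equals $\prod_j(\mu_j+1)$, finite and independent of $k$. Hence, as $k$ varies, $M(\lambda)$ ranges over a fixed finite set $\mathcal{P}$ of polynomials. Since for each $p\in\mathcal{P}$ the sum of the moduli of its coefficients is a well-defined finite number, I would simply set
$$
M_0=\max_{p\in\mathcal{P}}\ \sum_{i}\bigl\lvert (\text{coefficient of }\lambda^{i}\text{ in }p)\bigr\rvert ,
$$
a maximum over finitely many finite quantities. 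As $M(\lambda)\in\mathcal{P}$ for every $k$, the bound $\sum_{i=0}^{d_k}\lvert\eta_i^{(k)}\rvert\leq M_0$ is immediate.

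The one point requiring care — and what I regard as the crux — is the \emph{uniformity} in $k$. A more computational route would bound each $\eta_i^{(k)}$ directly, viewing it as an elementary symmetric function of the roots of $M$, which are eigenvalues of $J$ and therefore bounded in modulus by the spectral radius $\rho(J)$, with $d_k\leq\deg\Pi$; this yields a bound depending only on $\rho(J)$ and $\deg\Pi$, again independent of $k$. I would nonetheless lead with the divisibility-plus-finiteness argument, since it isolates the essential mechanism (finitely many admissible minimal polynomials) and avoids the routine symmetric-function estimates altogether.
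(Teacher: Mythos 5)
Your proposal is correct, and it takes a genuinely different route from the paper. The paper argues computationally: it notes that $M(\lambda)$ divides the characteristic polynomial, so its roots are eigenvalues of $J$; it then writes each coefficient $\eta_i^{(k)}$ as $\pm$ an elementary symmetric polynomial $e_{d_k-i}$ of those eigenvalues, bounds $\vert e_{d_k-i}\vert \leq \binom{d_k}{i}\alpha^{d_k-i}$ using a bound $\alpha$ on the eigenvalue moduli, and sums via the binomial theorem to get the explicit constant $M_0=(1+\alpha)^{N}$ (using $d_k\leq N$). Your primary argument replaces all of this with a structural observation: $M(\lambda)$ divides the global minimal polynomial $\Pi(\lambda)$ of $J$ (since the annihilating polynomials of the vector $x_k-x^*$ form a principal ideal generated by $M$), and $\Pi$ has only finitely many monic divisors, so $M$ ranges over a fixed finite set $\mathcal{P}$ independent of $k$ and one may take $M_0$ to be the maximum coefficient sum over $\mathcal{P}$. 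Both are valid; what each buys is clear. Your divisibility-plus-finiteness argument is shorter, cleaner, and sidesteps the symmetric-function estimates entirely (it also avoids the paper's slight sloppiness of invoking an eigenvalue bound $\alpha$ that is never formally introduced and notationally collides with the $\alpha$ of Theorem~{\upshape\ref{theom2}}). The paper's computation, in exchange, produces an \emph{explicit} constant $(1+\alpha)^{N}$ in terms of the spectral radius bound and the dimension, rather than an abstract maximum over a finite set; for the application in Theorem~{\upshape\ref{theom3}}, however, only the existence of a $k$-independent constant matters, so either version suffices. Your secondary remark (bounding coefficients by symmetric functions of roots with $d_k\leq\deg\Pi$) is essentially the paper's proof, so you have in effect subsumed it as a fallback.
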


\begin{proof}
The minimal polynomial $M(\lambda)$ of $J$ satisfies 
$$M(J) (x_k - x^*):=  \sum_{i=0}^{d_k} \eta_i^{(k)} J ^{i}(x_k - x^*) = 0,$$
where
$$ d_k = \min \{p/ \sum_{i=0}^{p} \eta_i^{(k)} J ^{i}(x_k - x^*) =0,\, \eta_{p}^{(k)}=1\} .$$    
Moreover, the characteristic polynomial $P_c(\lambda)$ of $J$ is defined as:
$$P_{c }(\lambda)= \text{det} (\lambda I- J)= \prod _{i=1} ^ {N} (\lambda - \lambda_i)^{\alpha_i}.$$
$M(\lambda)$ divides $P_c(\lambda)$. This implies that the roots (eigenvalues) of $M(\lambda)$ are a subset of the roots of $P_c(\lambda)$. Let $\lambda_1,\lambda_2,...,\lambda_N$ be the eigenvalues of $J$. The minimal polynomial $M(\lambda)$ is:
$$ M(\lambda)= \prod _{i=1} ^ {d_k} (\lambda - \lambda_i)^{\beta_i},$$
where $1 \leq \beta_i \leq \alpha_i,$
and $\lambda_1,\lambda_2,...,\lambda_{d_k}$ are a subset of the eigenvalues of $J$.
The coefficients $\eta_i^{(k)}$ of the minimal polynomial are related to the elementary symmetric polynomials of the eigenvalues $\lambda_1,\lambda_2,...,\lambda_{d_k}$. These coefficients can be expressed as:
$$ \eta_i^{(k)} = (-1)^{{d_k}-i} e_{{d_k}-i}(\lambda_1,\lambda_2,...,\lambda_{d_k}),$$
where $e_{i}$ is the i-th elementary symmetric polynomial that satisfies:
$$
\begin{cases}
e_i(\lambda_1, \lambda_2, \ldots, \lambda_{d_k}) = \sum_{1 \leq j_1 < j_2 < \cdots < j_i \leq d_k} \lambda_{j_1} \lambda_{j_2} \cdots \lambda_{j_i}, \\
 \vert e_{i}(\lambda_1,\lambda_2,...,\lambda_{d_k}) \vert \leq   \binom{d_k}{i}  \alpha^{i}.             \\
  \end{cases}$$
Therefore,
$$\vert \eta_i^{(k)} \vert \leq   \binom{d_k}{d_k -i}  \alpha^{d_k -i}=\binom{d_k}{i}  \alpha^{d_k -i} ,$$ 
it follows that 
$$
\sum_{i=0}^{d_k} \vert \eta_i^{(k)}\vert \leq \sum_{i=0}^{d_k}  \binom{d_k}{i}  \alpha^{d_k -i}    ,
$$
by the binomial theorem, we get:
$$
\sum_{i=0}^{d_k} \vert \eta_i^{(k)}\vert \leq (1 + \alpha )^{d_k} \leq M_0,
$$
where $M_0=(1 + \alpha )^{N}.$
%if we define $M_0= (1 + \alpha )^{N}$, then we obtain:
%\[
%\sum_{i=0}^{d_k} \vert \eta_i^{(k)}\vert \leq M_0.
%\]

\end{proof}

 \begin{theorem}\label{theom3}
If $G$ satisfies i), and ii), then $\forall x_0\in D $,
$$\Vert \tilde{r}(x_{k+1} )\Vert = O(\Vert x_k -x^*\Vert^2) .$$
 \end{theorem}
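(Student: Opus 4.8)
The plan is to bound $\Vert \tilde{r}(x_{k+1})\Vert_2$ from above by exhibiting a single convenient coefficient vector, exploiting the fact that for the RRE method the generalized residual is an \emph{orthogonal projection}. Writing $P_{d_k} = \Delta^{2} S_{d_k}\,\Delta^{2} S_{d_k}^{+}$ exactly as in the proof of Theorem~\ref{theom1}, one has $\tilde{r}(x_{k+1}) = (I - P_{d_k})\Delta s_0$, so that
$$
\Vert \tilde{r}(x_{k+1})\Vert_2 = \min_{c \in \mathbb{R}^{d_k}} \Vert \Delta s_0 - \Delta^{2} S_{d_k}\, c \Vert_2 .
$$
Hence it is enough to produce \emph{one} vector $c^\star$ with $\Vert \Delta s_0 - \Delta^{2} S_{d_k}\, c^\star\Vert_2 = O(\Vert x_k - x^*\Vert^2)$; the least-squares residual is then automatically no larger.

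First I would set up an error expansion for the inner iterates. Writing $\epsilon_i = s_i - x^*$ with $s_0 = x_k$, condition ii) together with a Taylor expansion of $G$ about $x^*$ (recall $G(x^*)=x^*$ and $G'(x^*)=J$) gives $\epsilon_{i+1} = J\epsilon_i + O(\Vert \epsilon_i\Vert^2)$. Since $d_k \leq N$ is bounded, a short induction yields $\epsilon_i = J^{\,i}\epsilon_0 + O(\Vert \epsilon_0\Vert^2)$ for all $0\leq i\leq d_k+1$, where $\epsilon_0 = x_k - x^*$. Consequently, from $\Delta s_i = \epsilon_{i+1}-\epsilon_i$,
$$
\Delta s_i = J^{\,i}(J-I)\epsilon_0 + O(\Vert \epsilon_0\Vert^2), \qquad i = 0,\dots,d_k .
$$

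The key device is to combine the minimal-polynomial identity with Lemma~\ref{minimpoly}. Multiplying $M(J)\epsilon_0 = \sum_{i=0}^{d_k}\eta_i^{(k)}J^{\,i}\epsilon_0 = 0$ by $(J-I)$ and inserting the expansion above, the leading terms cancel while the bound $\sum_i |\eta_i^{(k)}| \leq M_0$ controls the remainder, so that
$$
\sum_{i=0}^{d_k}\eta_i^{(k)}\,\Delta s_i = (J-I)\,M(J)\epsilon_0 + O\!\big(M_0\Vert \epsilon_0\Vert^2\big) = O(\Vert \epsilon_0\Vert^2).
$$
Next I would perform a summation by parts using the telescoping identity $\Delta s_i = \Delta s_0 + \sum_{j=0}^{i-1}\Delta^{2} s_j$, which rewrites the left-hand side as $M(1)\,\Delta s_0 + \sum_{j=0}^{d_k-1}\tau_j\,\Delta^{2} s_j$, with $\tau_j = \sum_{i=j+1}^{d_k}\eta_i^{(k)}$ and $M(1)=\sum_{i=0}^{d_k}\eta_i^{(k)}$. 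Because $J-I$ is regular by i), the value $1$ is not an eigenvalue of $J$ and therefore $M(1)\neq 0$; solving for $\Delta s_0$ produces the candidate $c^\star$ with entries $c^\star_j = -\tau_j/M(1)$, for which
$$
\Delta s_0 - \Delta^{2} S_{d_k}\,c^\star = \frac{1}{M(1)}\sum_{i=0}^{d_k}\eta_i^{(k)}\Delta s_i = O(\Vert x_k - x^*\Vert^2).
$$

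The main obstacle is making every constant uniform in $k$, so that the $O$-term is genuinely quadratic. Two estimates achieve this. Lemma~\ref{minimpoly} bounds $|\tau_j| \leq \sum_i|\eta_i^{(k)}| \leq M_0$, and the regularity of $J-I$ bounds $|M(1)| = \prod_i|1-\lambda_i|^{\beta_i}$ away from zero: since $\sigma_{\min}(J-I) = \Vert(J-I)^{-1}\Vert^{-1} = M^{-1} \leq \min_i|1-\lambda_i|$, each factor obeys $|1-\lambda_i|\geq M^{-1}$, whence $|M(1)| \geq M^{-d_k} \geq \min\{1,M^{-N}\} > 0$, independently of $k$. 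Combining these with the minimal-residual characterization gives
$$
\Vert \tilde{r}(x_{k+1})\Vert_2 \leq \Vert \Delta s_0 - \Delta^{2} S_{d_k}\,c^\star\Vert_2 = O(\Vert x_k-x^*\Vert^2),
$$
with a constant depending only on $L$, $\Vert J\Vert$, $N$, $M$ and $M_0$ — precisely the quantities already fixed by i), ii) and Lemma~\ref{minimpoly}. The only delicate bookkeeping is the induction controlling the nonlinear remainders $\epsilon_i - J^{\,i}\epsilon_0$ uniformly over $i\leq d_k+1$, which is where the boundedness $d_k\leq N$ is used.
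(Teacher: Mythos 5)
Your proof is correct, and its skeleton matches the paper's: both arguments expand the inner iterates around $x^*$ using hypothesis ii) to get $\Delta s_i = (J-I)J^i(x_k - x^*) + O(\Vert x_k - x^*\Vert^2)$, both invoke the minimal-polynomial annihilation $M(J)(x_k - x^*) = 0$ together with Lemma~\ref{minimpoly} to control the summed remainders, both use hypothesis i) to guarantee $M(1) = \sum_i \eta_i^{(k)} \neq 0$, and both finish by noting that the RRE generalized residual is an orthogonal projection of $\Delta s_0$ (you phrase this as least-squares minimality, the paper as $\Vert I - \Delta^{2} S_{d_k}\Delta^{2} S_{d_k}^{+}\Vert \leq 1$; these are the same fact). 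The differences lie in the middle step, and they are genuine refinements. First, where the paper applies the coefficients $\eta_i^{(k)}$ to the second differences and then extracts $(J-I)^{-1}\Delta^2 s_0$ through the factorization $J^i - I = (J-I)(J^{i-1} + \cdots + I)$, you apply them to the first differences and perform a summation by parts via $\Delta s_i = \Delta s_0 + \sum_{j<i}\Delta^2 s_j$; this produces completely explicit coefficients $c_j^\star = -\tau_j/M(1)$ and shortens the bookkeeping. Second, and more substantively, you bound $\vert M(1)\vert$ from below uniformly in $k$: each eigenvalue factor satisfies $\vert 1 - \lambda_i\vert \geq \sigma_{\min}(J-I) = 1/M$, hence $\vert M(1)\vert \geq \min\{1, M^{-N}\}$. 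The paper's proof only establishes $\sum_i \eta_i^{(k)} \neq 0$ by contradiction and then divides by this quantity inside its error terms; without a uniform lower bound, the implied constant in the resulting $O(\cdot)$ could in principle degrade with $k$. Your singular-value estimate closes that small gap, so on this point your write-up is actually tighter than the paper's own argument.
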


 \begin{proof}
Consider the functions $F(x) = G(x) - x$ and $g(x) = F(x) - F ^{'}(x^*)(x - x^*)$. This implies:

\[
\|g(x)\| \leq \frac{1}{2} L \|x - x^*\|^2,
\]
for all $x$ in the domain $D$ (refer to \cite{R39}).\\
We have $\Delta s_i = s_{i+1}- s_i= G(s_i) - G(x^*) - (s_i - x^*)$,
%and since 
%$G(s_i)=g(s_i)+ J(s_i - x^*)+x^*$
thus
\[
\Delta s_i = (J - I)(s_i - x^*) + g(s_i) = (J - I)(s_i - x^*) + \varepsilon_{i,1}(s_i - x^*),
\]
with $\Vert \varepsilon_{i,1}(s_i - x^*)\Vert = O(\Vert s_i - x^*\Vert^2)$, and
$$
s_{i+1} - x^* = \Delta s_i + s_i - x^* = (J - I)(s_i - x^*) + \varepsilon_{i,1}(s_i - x^*) + ( s_i - x^*) = J(s_i - x^*) + \epsilon_{i,1}(s_i - x^*).
$$
Using these equations, it easily follows that
\begin{align*}
\Delta^2 s_i &= \Delta s_{i+1} - \Delta s_i \\
&= (J - I)(s_{i+1} - x^*) + \varepsilon_{i,1}(s_{i+1} - x^*) - (J - I)(s_i - x^*) - \varepsilon_{i,1}(s_i - x^*) \\
&= (J - I)^2 J ^{i} (x_k - x^*) + \varepsilon_{i,2}(x_k - x^*),
\end{align*}
with $\Vert \varepsilon_{i,2}(x_k - x^*)\Vert = O(\Vert x_k - x^*\Vert^2)$.\\
Now, let try to write $\Delta s_0= \Delta^{2} S_{d_k} a^{(k)}+ \varepsilon(x_k - x^*)$, where $\Vert \varepsilon(x_k - x^*)\Vert = O(\Vert x_k - x^*\Vert ^2).$\\
%Let  $M(\lambda)= \sum_{i=0}^{d_k} \eta_i^{(k)} \lambda ^{i},\,\eta_{d_k}^{(k)}=1, $ be the minimal polynomial of the matrix $J$ with respect to the vector $(x_k - x^*)$, i.e.,
%$$M(J) (x_k - x^*):=  \sum_{i=0}^{d_k} \eta_i^{(k)} J ^{i}(x_k - x^*) = 0,$$
%and
%$$ d_k = \min \{p/ \sum_{i=0}^{p} \eta_i^{(k)} J ^{i}(x_k - x^*) =0,\, \eta_{p}^{(k)}=1\} .$$
%Let $\eta_i^{(k)}$ be the coefficients of the minimal polynomial of %$J$ for $x_k - x^*$:
%\[
%\sum_{i=0}^{d_k} \eta_i^{(k)} J^i(x_k - x^*) = 0, \text{ with } %\eta_{d_k}^{(k)} = 1.
%\]
Next, let us multiply each $\Delta^2 s_i$ by $\eta_i^{(k)}$ and then sum up; thus from the expression of $\Delta^2 s_i$ we obtain:
\begin{align*}
\sum_{i=0}^{d_k} \eta_i^{(k)} \Delta^2 s_i&= (J - I)^2\sum_{i=0}^{d_k} \eta_i^{(k)} J^i (x_k - x^*) + \sum_{i=0}^{d_k} \eta_i^{(k)} \varepsilon_{i,2} (x_k - x^*) \\
&= \sum_{i=0}^{d_k} \eta_i^{(k)} \varepsilon_{i,2} (x_k - x^*). 
\end{align*}
Since $\sum_{i=0}^{d_k} \vert \eta_i^{(k)}\vert \leq M_0$ (by lemma \ref{minimpoly}), we have on the one hand
\[
\sum_{i=0}^{d_k} \eta_i^{(k)} \Delta^2 s_i = \varepsilon_{3 }(x_k - x^*), \text{ with } \Vert \varepsilon_{3} (x_k - x^*)\Vert = O(\Vert x_k - x^*\Vert^2),
\]
and
\[
\sum_{i=0}^{d_k} \eta_i ^{(k)} J^i \Delta^2 s_0 = (J - I)\varepsilon_{4} (x_k - x^*), 
\]
with  $\Vert \varepsilon_{4} (x_k - x^*)\Vert = O(\Vert x_k - x^*\Vert ^2)$.\\
On the other hand,
\[
\sum_{i=0}^{d_k} \eta_i ^{(k)}  (J^i - I)\Delta^2 s_0 = -\sum_{i=0}^{d_k} \eta_i^{(k)}  \Delta^2 s_0 + (J - I)\varepsilon_{4 }(x_k - x^*).
\]
We are going to prove by contradiction that $\sum_{i=0}^{d_k} \eta_i ^{(k)} \not=0.$ Suppose that $\sum_{i=0}^{d_k} \eta_i ^{(k)} =0,$ which means that $M(1)=0$, and since the minimal polynomial $M(\lambda)$ divides every annihilating polynomial, including the characteristic polynomial $P_c$, which is annihilating by the Cayley-Hamilton theorem. Then, $1$ is an eigenvalue of $J$, which implies that $0$ is an eigenvalue of the matrix $J-I$. This is impossible because $J-I$ is nonsingular.
%Because $1$ is not a zero of the minimal polynomial of $J$, it indicates that $1$ is not an eigenvalue of $J$. Moreover, given that the minimal polynomial of $J$ divides the characteristic polynomial of $J$. Consequently, $\sum_{i=0}^{d_k} \eta_i ^{(k)} >0.$ 
Thus, by using the above equation, we get:
$$\frac{\eta_1^{(k)}}{\sum_{i=0}^{d_k} \eta_i^{(k)}} \Delta^2 s_0 + \sum_{i=2}^{d_k}  \frac{\eta_i^{(k)}}{\sum_{i=0}^{d_k} \eta_i^{(k)}} ( J^{i-1}+...+ I ) \Delta^2 s_0= -(J-I)^{-1} \Delta^2 s_0 + \varepsilon_{4}(x_k - x^*).$$
Then, we have
\begin{equation*}
\begin{array}{rcl}
(J - I)^{-1} \Delta^2 s_0 & = & \sum_{i=0}^{d_k - 1} a_i ^{(k)} J^ {i} \Delta^2 s_0 + \varepsilon_4 (x_k - x^*) \\
& = & \sum_{i=0}^{d_k - 1} a_i ^{(k)}\Delta^2 s_i + \varepsilon_4 (x_k - x^*).\\
\end{array}
\end{equation*} 
Moreover,
\begin{equation*}
\begin{array}{rcl}
\Delta s_0 & = &(J - I)^{-1} \Delta^2 s_0 + \varepsilon_5 (x_k - x^*), \text{ with } \Vert \varepsilon_5 (x_k - x^*)\Vert = O(\Vert x_k - x^*\Vert^2), \\
& = &  \Delta^{2} S_{d_k} a^{(k)} + \varepsilon(x_k - x^*), \text{ with } \varepsilon(x) = \varepsilon_4 (x) + \varepsilon_5 (x).\\
\end{array}
\end{equation*} 
Therefore, we can rewrite the generalized residual of $x_{k+1}$ for the RRE method as :
\begin{equation*}
\begin{array}{rcl}
\tilde{r}(x_{k+1}) & = & \Delta s_{0}-\Delta^{2} S_{d_k}\Delta^{2} S_{d_k}^{+}\Delta s_{0} \\
& = &   \Delta^{2} S_{d_k} a^{(k)} + \varepsilon(x_k - x^*) - \Delta^{2} S_{d_k}\Delta^{2} S_{d_k}^{+} \left(\Delta^{2} S_{d_k} a^{(k)} + \varepsilon(x_k - x^*)\right) \\
& = &   \left (  I- \Delta^{2} S_{d_k} \Delta^{2} S_{d_k}^{+}    \right) \varepsilon(x_k - x^*). \\
\end{array}
\end{equation*} 
Finally, since we have $\Vert I- \Delta^{2} S_{d_k} \Delta^{2} S_{d_k}^{+}\Vert\leq 1,$ \\ 
as a result, we get 
$$\Vert \tilde{r}(x_{k+1}) \Vert  \leq \Vert \varepsilon(x_k - x^*) \Vert.$$
Consequently, 
$$\Vert \tilde{r}(x_{k+1}) \Vert  =  O(\Vert (x_k - x^*)^2 \Vert).$$
\end{proof}

\newpage
\subsection{The Anderson acceleration method}
The following includes a description and the implementation of the Anderson acceleration method.\\
Assume that the vector sequences $(s_k)_{k\in \mathbb{N}}$  of $\mathbb{R}^{\mathbb{N}}$ are produced by the fixed point iteration (\ref{eq5}). For the purpose of accelerating this fixed-point iteration, the standard general form of Anderson acceleration ~\cite{R22,R23} is as follows (Algorithm \ref{algo4}):
\begin{algorithm}
\caption{The Anderson acceleration (AA)}\label{algo4}
\begin{algorithmic}[1]
\Require $\text{Given an initial guess} \,x_{0}$ \text{and} $m\geq 1$
\Ensure $x_{k+1}: \text{the Anderson extrapolated  approximation} $
\State \text{Set} $s_0= x_0$ \text{and} $s_{1}=G(s_0)$
\For{$k=1,2...$}
            \State \text{Set} $m_k= \min \{m,k\}$
            \State \text{Compute} $F_k= (f_{k-m_k},...,f_k ),$ \text{where} $f_i= G(s_i)- s_i,$ \,\, $i=k-m_k,...,k$   
            \State \text{Determine} $\beta ^ {(k)}= (\beta_0 ^ {(k)},...,\beta_{m_k} ^ {(k)})^T$ by solving 
           	\begin{equation}
			\begin{aligned}
				\min_{\beta= (\beta_0, \ldots, \beta_{m_k})^T } & \quad \Vert F_k \beta \Vert_2 \\
				\text{s.t.} & \quad \sum_{i=0}^{m_k} \beta_i = 1
			\end{aligned}
			\label{eq6}
		\end{equation}
            \State \text{Compute} $x_{k+1}= \sum_{i=0}^{m_k} \beta_{i}^{(k)} G(s_{k-m_k +i})$
\EndFor
\end{algorithmic}
\end{algorithm}

For the sake of explanation, the Anderson depth in Algorithm \ref{algo4} is denoted by $m_k$. The parameter $m$ sets the maximum depth, and in the following, we commonly represent the algorithm by AA($m$).

It should be noted that a more general version,
\begin{equation}
\begin{array}{rcl}
x_{k+1} & = & (1-\alpha_k)\sum_{i=0}^{m_k} \beta_{i}^ {(k)} s_{k-m_k +i} + \alpha_k \sum_{i=0} ^{m_k} \beta_{i}^{(k)} G(s_{k-m_k +i}),\label{eq7}
\end{array}
\end{equation} 
is possible with the original Anderson acceleration approach, where $\alpha_k > 0$ is a relaxation parameter. The Anderson acceleration (AA) method is sometimes referred to as the Anderson mixing method in some application areas, such as electronic-structure computations, where $\alpha_k$ is known as the Anderson mixing coefficient. Here, it is appropriate to just take into account $\alpha_k = 1$, which provides the step in Algorithm \ref{algo4}.

There are various approaches to solving the constrained linear least-squares issue (\ref{eq6}) in Algorithm \ref{algo4}; see \cite{R26} for a few of them. Our choice is to redefine it in the unconstrained manner that \cite{R23,R25} suggests.
Therefore, we define 
$$ \Delta f_i = f_{i+1}- f_i , \,\,\, i=k-m_k,..., k-1,$$
and 
$$   \mathcal{F}_k= (\Delta f_{k-m_k},...,\Delta f_{k-1}).  $$
Hence, the least-squares issue (\ref{eq6}) is equivalent to

\begin{equation}
   \displaystyle \min_{\theta= (\theta_0,...,\theta_{m_k-1})^T } \Vert  f_k - \mathcal{F}_k \theta) \Vert_2,    
   \label{eq8}
\end{equation}
in which $\beta$ and $\theta$ are connected by

$$
 \begin{cases}
\beta_0= & = \theta_0, \\
\beta_i &= \theta_i - \theta_{i-1}, \;\;\; 1\leq i \leq {m_k -1},\\
 \beta_{m_k}&= 1- \theta_{m_k-1},\\
\end{cases}
$$
and 
$$  \theta_i= \sum_{j=0}^{i}  \beta_j, \;\;\; i=0,...,m_k -1.            $$
A modified variant of Anderson acceleration results from this unconstrained least-squares issue (\ref{eq8}). Using $\theta^{(k)}= (\theta ^{(k)}_0,...,\theta^{(k)}_{m_k-1})^T $ to indicate the least-squares solution, then we have

$$  x_{k+1}= G(s_k) - \sum_{i=0}^{m_k -1}  \theta_i^{(k)}  [G(s_{k- m_k +i +1}) -  G(s_{k- m_k +i})]  =    G(s_k) - \mathcal{G}_k \theta ^ {(k)},                                    $$
where 
$$   \mathcal{G}_k= (\Delta G_{k-m_k},...,\Delta G_{k-1}),     $$
with
$$    \Delta G_i = G(s_{i+1})- G(s_i) , \,\,\, i=k-m_k,..., k-1.$$
Next, Anderson's acceleration turns into (Algorithm \ref{algo5})

\begin{algorithm}
\caption{The Anderson acceleration (AA)}\label{algo5}
\begin{algorithmic}[1]
\Require $\text{Given an initial guess} \,x_{0}$ \text{and} $m\geq 1$
\Ensure $x_{k+1}: \text{the Anderson extrapolated  approximation} $

\State \text{Set} $s_0= x_0$ \text{and} $s_{1}=G(s_0)$
\For{$k=1,2...$}
            \State \text{Set} $m_k= \min \{m,k\}$
            \State \text{Calculate}  $G(s_k)$ \text{and let } $f_k = G(s_k)- s_k$  
            \State \text{Determine} $\theta ^ {(k)}= (\theta_0 ^ {(k)},...,\theta_{m_k-1} ^ {(k)})^T$ by solving 
            \begin{equation}
   	\min_{\theta= (\theta_0,...,\theta_{m_k-1})^T } \Vert f_k - \mathcal{F}_k \theta \Vert_2
	\label{eq10}
        \end{equation}
            \State \text{Compute} $x_{k+1}=  G(s_k) - \mathcal{G}_k \theta ^ {(k)}$
\EndFor

\end{algorithmic}
\end{algorithm}

\section{Numerical experiments}\label{sec4}

In this section, we are interested in demonstrating numerically the effectiveness of vector extrapolation methods when integrated with geometric multigrid methods with IGA for solving nonlinear problems.

\subsection{Numerical study of the nonlinear one-dimensional Bratu's equation}\label{sub1sec4}

We start by testing the Picard iterative method
%minimal polynomial extrapolation acceleration method (MPE)
on the following one-dimensional Bratu’s nonlinear boundary value problem (\ref{eq1dbratu}) given
by:
 \begin{equation} \label{eq1dbratu}
	\begin{cases}
		-\frac{\partial^2 u(x)}{\partial x^2} + \lambda e^{u(x)} & = f(x), \;\;\;\;\;\;\; 0<x<1, \\
		\;\;\;\;\; u(0)=u(1) & = 0. \\
	\end{cases}
\end{equation}
The exact solution and the right-hand side are given by:
$$u_(x)= \sin(2k\pi x),$$
$$f(x) = (2k\pi)^{2}\sin(2k\pi x) +\lambda e^{\sin(2k\pi x)}.$$ 
In our approach, we will simply use a single V-cycle to solve the linear system at each Picard iteration. We will use the V-Cycle (1,1) scheme with weighted Jacobi as a relaxation method where the relaxation parameter is $\omega = 2/3$. The resulting method is noted as Picard-MG. The following Figure~\ref{figure1} shows the convergence results of the Picard-MG method for different values of $\lambda$; in a fixed grid of $N=64$ points and for each fixed value of spline degree $p$, we report the number of iterations to reduce the “Relative Residual" which is the $L_2$ norm of 
%the relative residual
the difference between successive Picard iterations
$\frac{\Vert U^{n}-U^{n-1} \Vert_{L_{2}}}{\Vert U^{n} \Vert_{L_2}}$, where $U^{n-1}, U^{n}$ are the approximations obtained by the Picard method at iterations $n-1$ and $n$, respectively. The stopping criteria is when the accuracy of $10^{-12}$ is reached.

 \begin{figure}[ht!]
    \centering
    \includegraphics[width=4cm,height=4cm]{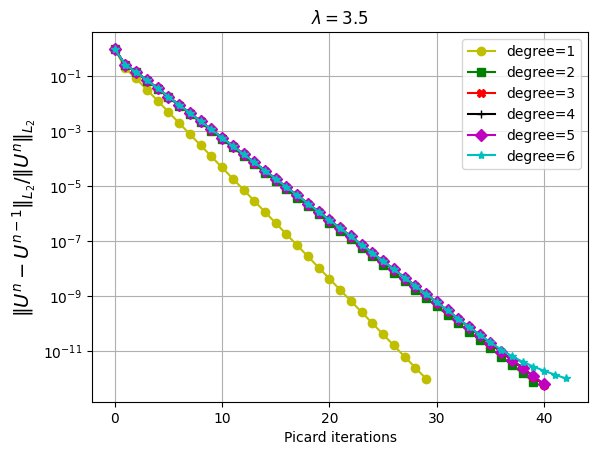}
    \includegraphics[width=4cm,height=4cm]{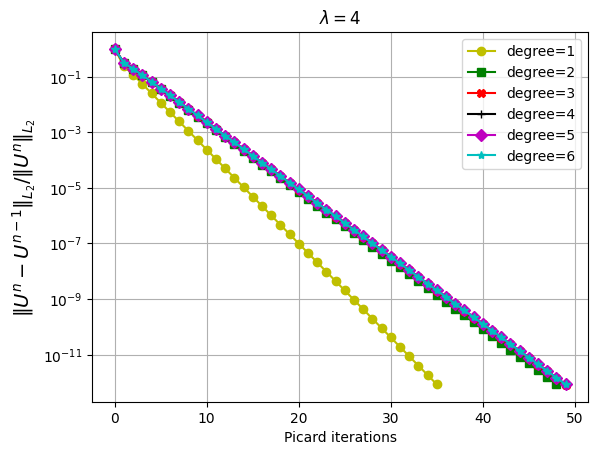}
    \includegraphics[width=4cm,height=4cm]{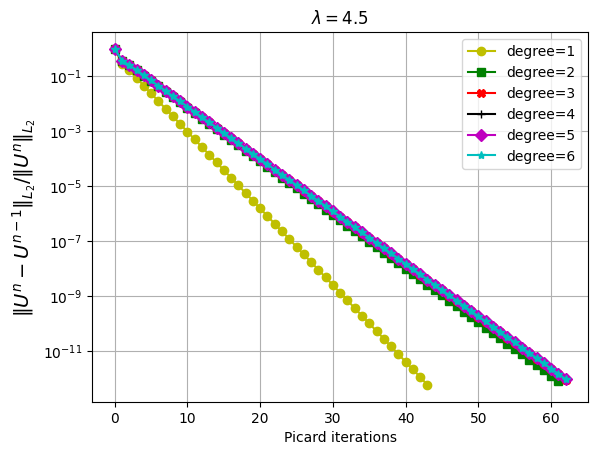}
    \includegraphics[width=4cm,height=4cm]{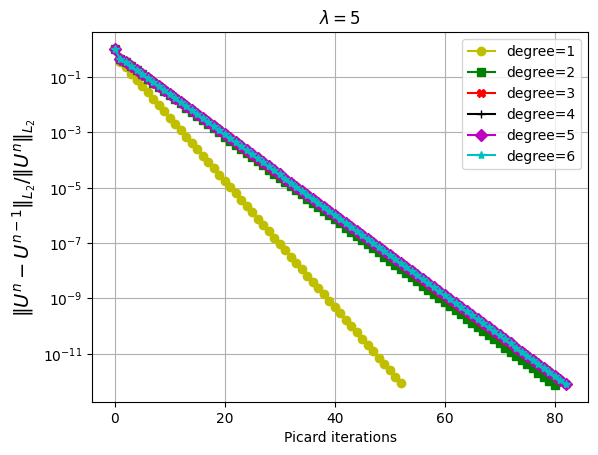}
    \caption{Convergence results of Picard-MG method for different values of spline degree $p$ and  $\lambda$}
    \label{figure1}
\end{figure}

Figure~\ref{figure1} shows that the Picard-MG method converges quickly for spline degree $1$ and its convergence is relatively independent of the degree of spline $p$. This is the same convergence behavior for different values of the parameter $\lambda$ in the Bratu problem. 

Moreover, in Figure~\ref{figure2}, numerical results for the Picard-MG method and the polynomial extrapolation methods coupled with Picard iterations (MPE-Picard-MG and RRE-Picard-MG) are given. For the purpose of comparison, results for the Anderson accelerated Picard-MG method are also given (AA-Picard-MG). Firstly, we note that the Picard-MG technique converges for small values of the parameter $\lambda$. However, we get superior convergence results in terms of the number of iterations and the $L_2$ error norm when we apply the restarted version of polynomial extrapolation methods to the Picard iterations than using the Picard approach alone. However, the figure clearly shows that the d'Anderson approaches are not as effective as the MPE and RRE methods.

\begin{figure}[ht!]
    \centering
    \includegraphics[width=6.5cm,height=6.5cm]{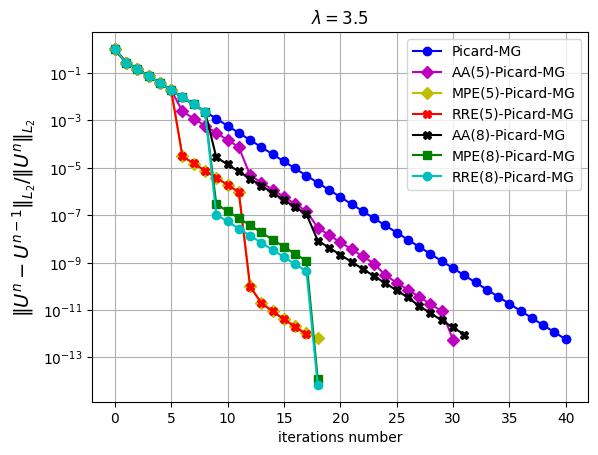}
    \includegraphics[width=6.5cm,height=6.5cm]{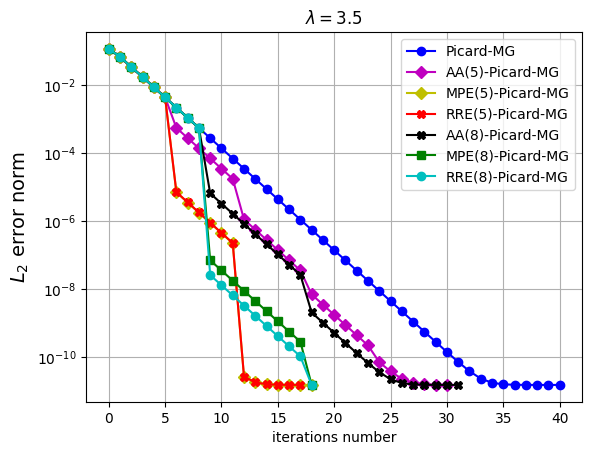}
  \caption{ Convergence results of the Picard method with and without extrapolation acceleration techniques for $\lambda=3.5$}
   \label{figure2}
\end{figure}
%\begin{figure}[ht!]
   % \centering
    %\includegraphics[width=6.2cm,height=6.2cm]{}
   % \includegraphics[width=6.2cm,height=6.2cm]{}
  % \caption{ Convergence results of the Picard method with and without extrapolation acceleration for $\lambda=3.5$. Left: the relative residual. Right: the error norm}
   %\label{figure2}
%\end{figure}
In the following, we will explain how the parameter $\lambda$ affects the computation of Picard's convergence. To achieve this, we will perform a numerical analysis of the effectiveness of our technique, which involves accelerating Picard's iterations through the use of restarted polynomial extrapolation methods \cite{R14}. We note that for high values of $\lambda$, the Picard-MG method diverges (see Table~\ref{table1}). Therefore, our goal is to develop a more precise and effective method for resolving Bratu's problem. We propose to use the restarted minimal polynomial and the reduced rank extrapolation methods (MPE-Picard-MG and RRE-Picard-MG) described in Algorithms~\ref{algo2} and~\ref{algo3}; more precisely, after $q$ iterations of the Picard-MG method, we apply MPE and RRE in its restarted version for the acceleration of the convergence of the Picard iterative method. We illustrate the performance of this new numerical method in Table~\ref{table1} which shows the convergence results of the Picard iterative method with one V-cycle scheme as a linear solver (Picard-MG), the restarted MPE and RRE methods applied to Picard iterations, and the Anderson-accelerated Picard-MG method. In order to show the limitation of the Picard-MG method, we also compare it with the Picard-slu method, which uses the LU factorization of the stiffness matrix to solve the linear system. Thus, we fix a high value of $\lambda=7$, we choose the spline degree $p=5$, and we vary the number of points $N$ (or size grid $h$). For each size grid, we report the number of iterations and CPU time in seconds to achieve a tolerance of $10^{-12}$. We also report the $L_2$ error norm, which is the norm of the difference between the exact and approached solutions of the Bratu equation.

\begin{table}[ht]
    \centering  
      \caption{Extrapolation and Picard methods were applied to the Bratu equation at different grid sizes and for spline degree $p=5$ and $\lambda=7$}
      \label{table16}
\begin{tabular}{ |c|c|c|c|c|c| } 
\hline
Grid size $h$ & Method &iter  & Relative Residual& $L_{2}\text{-err}$  & CPU(s)\\
\hline
%1000 0.386651654036315 0.14023181615762406
%37.22649002075195
%1000 0.3866516614731203 0.14023181897622475
%75.76058626174927
\multirow{3}{4em}{1/8} 
&Picard-slu & $1000^{a}$& 3.86e-01 & 1.40e-01 & 5.66\\ 
&Picard-MG & $1000^{a}$& 3.73e-01 &1.35e-01 & 31.81\\ 
& AA(5)-Picard-MG & 55 & 2.70e-13 &5.68e-06 &1.36\\  %here with q=5
& RRE(5)-Picard-MG & 25 & 2.68e-13& 5.68e-06 & 0.72\\  %here with q=5
& MPE(5)-Picard-MG & 25 &2.63e-13  &5.68e-06 & 0.69\\  %here with q=5
& RRE(8)-Picard-MG &37  & 2.79e-16 &5.68e-06 &  1.13\\  %here with
& MPE(8)-Picard-MG &37  & 2.16e-16 &5.68e-06 & 1.12 \\  %here with q=8
\hline
\multirow{3}{4em}{1/16} 
&Picard-slu & $1000^{a}$& 3.86e-01 & 1.40e-01 & 37.22\\ 
&Picard-MG & $1000^{a}$& 3.66e-01 & 1.32e-01 & 64.07\\ 
& AA(5)-Picard-MG & 57 & 9.60e-13& 6.76e-08 &6.52\\  %here with q=5
& RRE(5)-Picard-MG & 25 &4.73e-13 &  6.76e-08 & 2.78\\  %here with q=5
& MPE(5)-Picard-MG & 25 & 5.19e-13&  6.76e-08 & 2.60\\  %here with q=5
& RRE(8)-Picard-MG &37   &4.90e-16   &6.76e-08 & 5.89 \\  %here with
& MPE(8)-Picard-MG &37  & 1.07e-15& 6.76e-08 & 5.54 \\  %here with q=8
\hline

\multirow{3}{4em}{1/32} 
&Picard-slu & $1000^{a}$ &3.86e-01 & 1.40e-01 &75.76 \\ 
&Picard-MG &$1000^{a}$& 3.79e-01& 1.37e-01 &122.90\\ 
& AA(5)-Picard-MG & 57 & 5.28e-13&  9.64e-10 & 12.44 \\  %here with q=5
& RRE(5)-Picard-MG  & 25& 7.35e-14&  9.64e-10 & 6.18 \\ %here with q=5
& MPE(5)-Picard-MG  & 25& 7.72e-14 & 9.64e-10 & 6.09\\ %here with q=5
& RRE(8)-Picard-MG &37  & 1.35e-15 &9.64e-10& 7.01 \\  %here with
& MPE(8)-Picard-MG & 37 &5.58e-16 &  9.64e-10& 7.99 \\  %here with q=8
\hline 
\multirow{3}{4em}{1/64} 
&Picard-slu & $1000^{a}$& 3.86e-01&  1.40e-01 & 98.54\\ 
&Picard-MG & $1000^{a}$ &3.86e-01 & 1.40e-01 &223.09 \\ 
& AA(5)-Picard-MG & 56 &9.26e-13 &  1.46e-11 & 18.28 \\  %here with q=5
& RRE(5)-Picard-MG   & 25& 8.14e-14& 1.46e-11& 12.91\\  
& MPE(5)-Picard-MG   & 25& 8.61e-14& 1.46e-11& 10.53 \\  
& RRE(8)-Picard-MG & 37 &7.69e-16  & 1.46e-11& 16,23 \\  %here with
& MPE(8)-Picard-MG & 37 & 7.43e-16& 1.46e-11 & 15.64 \\  %here with q=8
\hline 
\multirow{3}{4em}{1/128} 
&Picard-slu & $1000^{a}$ &3.86e-01 & 1.40e-01 &160.68 \\ 
&Picard-MG & 592& 9.53e-13& 3.98e-13 & 301.49\\ 
& AA(5)-Picard-MG & 56 &3.37e-13 &  3.05e-13 &  34.99\\  %here with q=5
& RRE(5)-Picard-MG & 25 & 1.30e-14&  2.27e-13 &  20.57\\ 
& MPE(5)-Picard-MG & 25 &8.51e-15 &  2.27e-13 & 17.76 \\
& RRE(8)-Picard-MG & 28 & 6.12e-13 &2.27e-13&  21.14\\  %here with
& MPE(8)-Picard-MG & 28 & 3.27e-13 &2.27e-13 & 20.29 \\  %here with q=8
  
\hline  

%\multirow{3}{4em}{1/256} 
%&Picard-slu & 1000^{a} & 1.40e-01 & 312.16\\ 
%&Picard-MG & 32& 6.95e-14& 50.42\\ 
%& MPE(5)-Picard-MG  & 15 & 2.39e-14 & \textcolor{cyanish}{\textbf{31.16 }}\\ 
%\hline 
%\multirow{3}{4em}{1/512} 
%&Picard-slu & 1000^{a}& 1.40e-01 & 585.35\\  
%& Picard-MG & 58&4.63e-14&247.55  \\ 
%& MPE-Picard-MG   & 19& 2.77e-14& \textcolor{cyanish}{\textbf{91.65}}\\ 
%\hline
\end{tabular}
%\caption{ Extrapolation and Picard methods were applied to the Bratu equation at different grid sizes and for spline degree $p=5$. Number of iterations and CPU time in seconds to reduce the nonlinear relative tolerance to $10^{-12}$ for $\lambda=7$.}
\label{table1}
\end{table}

\text{$^{a}$ Indicates that nonlinear tolerance was not attained.}

On the one hand, Table~\ref{table1} clearly shows that the convergence of the Picard-MG method deteriorates for high values of the parameter $\lambda$, i.e., when the nonlinear term in the Bratu equation dominates, we obtain poor convergence for the Picard-MG method, and we deduce the same remark for the Picard-slu method. We also notice that the convergence of Picard-MG is unstable with respect to grid size. On the other hand, the polynomial extrapolation methods accelerate considerably the convergence of
the Picard iterations; this table clearly shows the reduction of the number of iterations of Picard's iterative method when using the restarted MPE and RRE methods every $q$ steps; moreover, the CPU time to achieve the desired precision is much better for the MPE-Picard-MG and the RRE-Picard-MG methods than for Picard-MG, and we also noted that the convergence of our hybrid technique is independent of the number of points. Furthermore, we observe that the convergence results obtained by the MPE-Picard-MG and the RR-Picard-MG methods are comparable. To sum up, our technique produces better convergence results even for high $\lambda$ and $p$ values in comparison to the Anderson-accelerated Picard-MG method, and its convergence is also robust with respect to $N$.

Figures~\ref{figure3} and~\ref{figure4} are only a clear visualization of Table~\ref{table1}; in Figure~\ref{figure3} we try to show the convergence behavior of the Picard iterative method with and without extrapolation acceleration techniques for a large value of the parameter $\lambda$ and for a different number of points $N$. Moreover, we illustrate the $L_2$ norm of the error in Figure~\ref{figure4}. Furthermore, we observe that Picard's method's convergence is improved more by polynomial extrapolation techniques than by Anderson's acceleration technique.

\begin{figure}[ht!]
\centering

  \includegraphics[width=5cm,height=5cm]{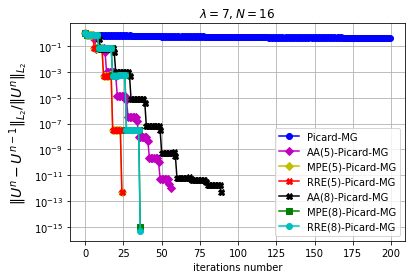}
    \includegraphics[width=5cm,height=5cm]{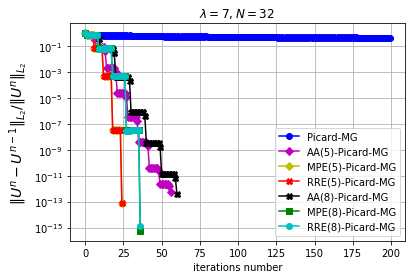}
  \includegraphics[width=5cm,height=5cm]{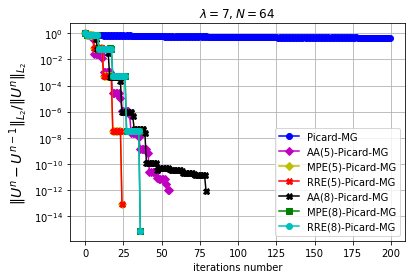}
   
    \caption{Behavior of the Picard method with and without extrapolation acceleration for $\lambda=7$ on different grid sizes}
    \label{figure3}
\end{figure}

\begin{figure}[ht!]
\centering

    \includegraphics[width=5cm,height=5cm]{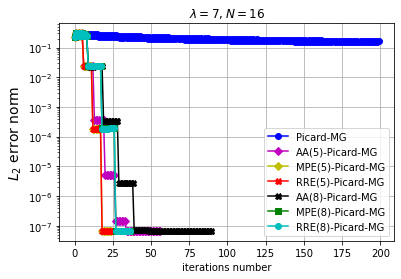}
     \includegraphics[width=5cm,height=5cm]{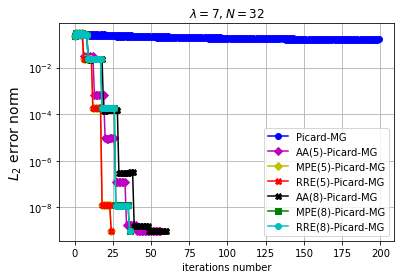}
     \includegraphics[width=5cm,height=5cm]{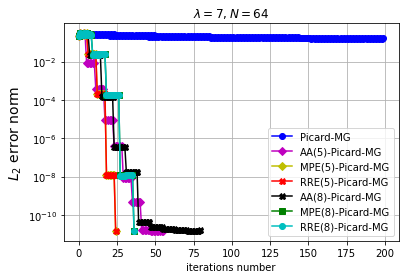}
  
    \caption{$L_2$ error norm of the Picard iterative method with and without extrapolation acceleration for $\lambda=7$ on different grid sizes}
    \label{figure4}
\end{figure}

We noticed that when we applied the MPE method and the RRE method to accelerate the iterations of Picard, we tested several values of $q$ and found that, for example, the value of $q=5$ indicates that our approach is optimal and robust (see Table~\ref{table1}). We indicate that, particularly in the nonlinear case, choosing the number of steps $q$ in the restarted extrapolation techniques is not always obvious. To illustrate these conclusions, we will vary the number of $q$ steps in the restarted MPE method. Figure~\ref{figure5} shows the behavior of the MPE-Picard-MG method for different values of $q=[2,3,4,5,6,7,8]$, where we try to save for each fixed number of points $N=[64,128]$ and for each spline degree $p=[3,4,5,6]$, the number of iterations required to achieve the desired accuracy for different values of $\lambda=[1,3,5,7,8]$.

 \begin{figure}[ht]
    \centering
\includegraphics[width=4cm,height=4cm]{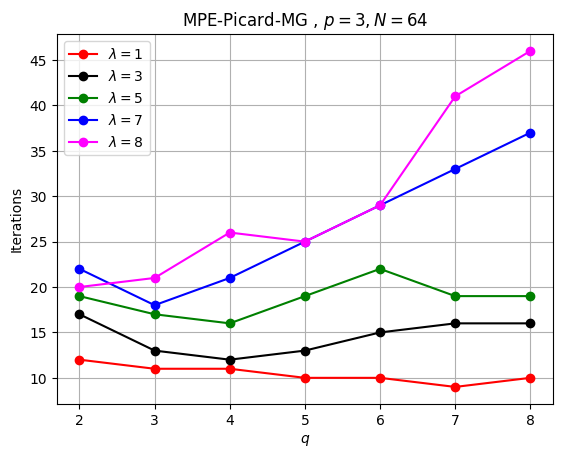}
\includegraphics[width=4cm,height=4cm]{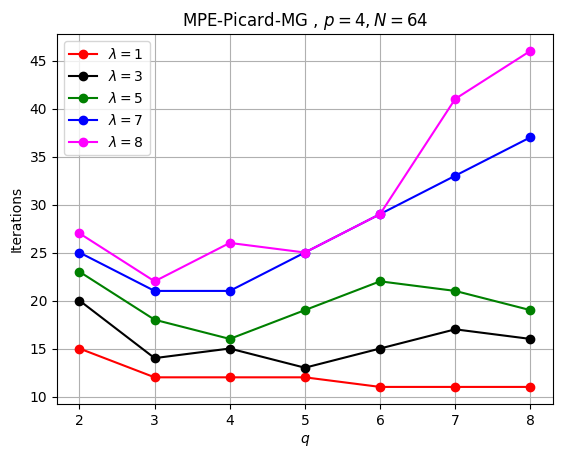}
 \includegraphics[width=4cm,height=4cm]{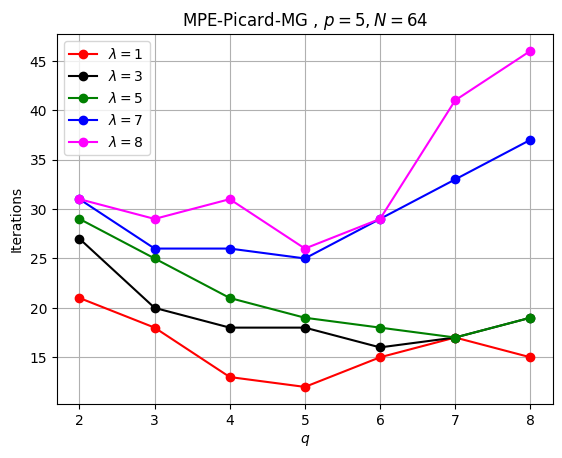}
\includegraphics[width=4cm,height=4cm]{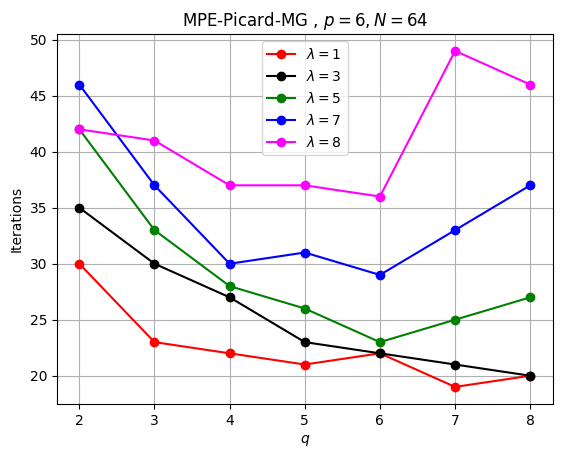}

    \includegraphics[width=4cm,height=4cm]{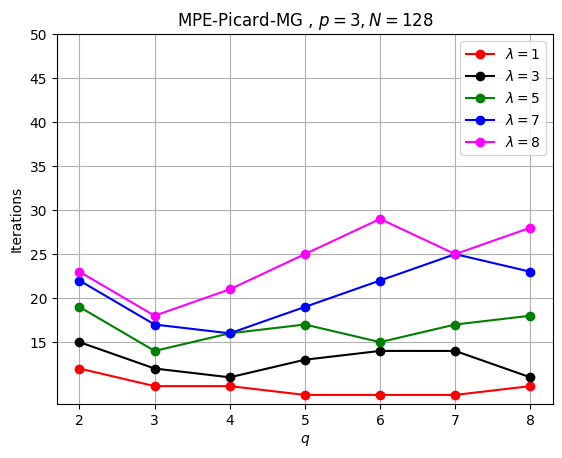}
    \includegraphics[width=4cm,height=4cm]{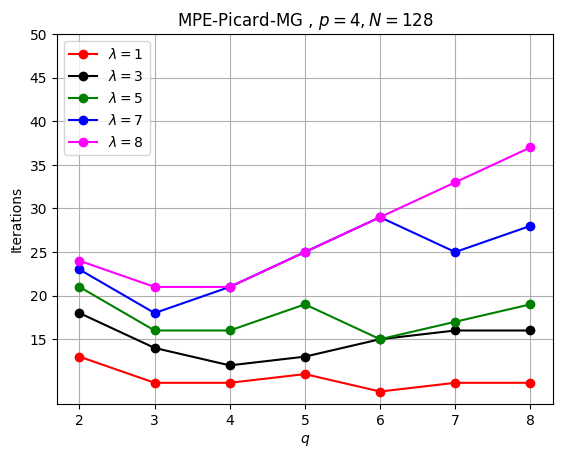}
    \includegraphics[width=4cm,height=4cm]{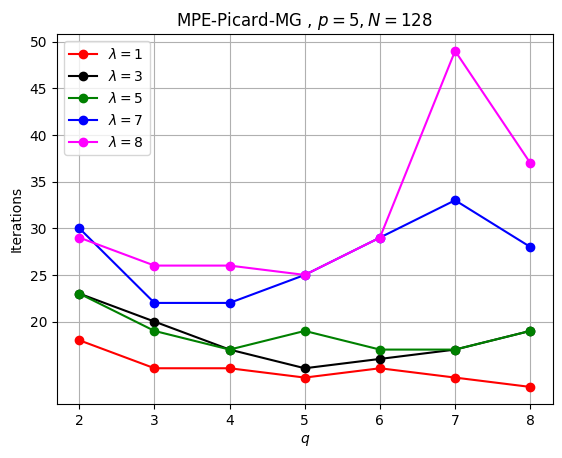}
    \includegraphics[width=4cm,height=4cm]{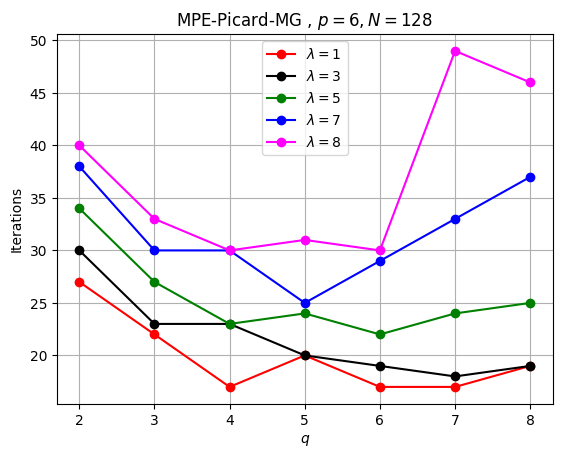}
     \caption{Behavior of the restarted MPE method were applied to Picard's iterations for different values of $q$ and $\lambda$. Top: the number of points is fixed at $N=64$. Bottom: $N=128$ }
    \label{figure5}
\end{figure}

Figure~\ref{figure5} shows that, unlike the linear case \cite{R14} (when we mentioned that the number of cycles in restarted RRE and MPE methods decreases as we increase the restart number $q$), the choice of the number of steps $q$ in the restarted MPE method is not evident; we have to choose an optimal $q$ from the numerical simulations that will make our approach optimal and robust with respect to the parameters $p$ and $N$. It's easy to see that in the nonlinear case, for high $\lambda$ values, the number of iterations of the MPE-Picard-MG method increases significantly with the number of restarts $q$. This can be explained by the fact that when $\lambda$ is large, the nonlinear term dominates Bratu's problem, and using MPE with a large number of restarts $q$  will affect the convergence of the latter since this is the same as solving the Bratu equation with the Picard iterative method, and we have shown before that Picard's convergence decreases for high values of $\lambda$. We indicate that the same convergence behavior is observed for the MPE method for different values of spline degree $p$. From this numerical result, we can choose, for example, $q=5$, since the MPE method is more stable and robust. Then, in the next numerical test example, we will illustrate the performance and efficiency of our approach, the MPE-Picard-MG method, where we will try to vary all the parameters in the $1D$ Bratu model problem: $\lambda, p$, and $N$ (see Table \textcolor{blue}{2}).

\begin{table}[ht!]

\centering
%\caption{Restarted extrapolation (MPE) method with $q=5$ were applied to the Bratu equation at different grid sizes and for different spline degrees and $\lambda$. The number of iterations to reduce the nonlinear relative tolerance to $10^{-12}$}
\textbf{Table 2} Restarted extrapolation (MPE) method with $q=5$ were applied to the Bratu equation at different grid sizes and for different spline degrees and $\lambda$. The number of iterations to reduce the nonlinear relative tolerance to $10^{-12}$  

\begin{tabular}{lllllllll}
\hline
\hline
\end{tabular}
\begin{adjustbox}{width=1.\textwidth}
\begin{subtable}{1.\textwidth} 
    \centering 
    \hspace*{-2. cm}
\begin{tabular}{c p{0.cm} cccc  c cccc}
\hline
 &  & \multicolumn{4}{l}{$\lambda=1$} &  & \multicolumn{4}{l}{$\lambda=3$} \\ \cmidrule(l){2-6} \cmidrule(l){7-11}
\diaghead{taun-----}{$p$}{$N$} &  & $16$   & $32$   & $64$   & $128$    & & $16$   & $32$   & $64$   & $128$    \\ \hline
$1$    & & $10$  & $11$  & $10$  & $13$  
& & $13$ & $13$  & $13$  & $13$ \\
$2$    & & $11$  & $11$   & $11$  & $11$
& & $13$  & $13$  & $13$  & $12$ \\
$3$    & & $12$  & $12$   & $10$  & $9$ 
& & $13$  & $13$  & $13$  & $13$\\
$4$    & & $13$  & $12$   & $12$   & $11$  
& & $14$  & $14$  & $13$  & $13$\\
$5$          & & $17$  & $16$   & $15$   & $14$ 
& & $19$  & $20$  & $18$  & $15$ \\
$6$     & & $21$  & $23$   & $21$   & $20$ 
& & $24$  & $26$   & $23$  & $20$ \\
\end{tabular}
\end{subtable}
\end{adjustbox}

%+++++++++++++++++++++++++++++++++++++++++++++++++++++++++++++++++++++++++

\vspace{0.5cm}

%+++++++++++++++++++++++++++++++++++++++++++++++++++++++++++++++++++++++++

\begin{adjustbox}{width=1.\textwidth}
\begin{subtable}{\textwidth} 
    \centering 
    \hspace*{-2. cm}
\begin{tabular}{c p{0.cm} cccc  c cccc}

\hline
 &  & \multicolumn{4}{l}{$\lambda=5$} &  & \multicolumn{4}{l}{$\lambda=7$} \\ \cmidrule(l){2-6} \cmidrule(l){7-11}
\diaghead{taun-----}{$p$}{$N$} &  & $16$   & $32$   & $64$   & $128$    & & $16$   & $32$   & $64$   & $128$   \\ \hline
$1$    & & $13$ & $18$  & $18$ & $ 13$ 
& & $19$ & $25$ & $19$ & $13$ \\
$2$    & & $19$ & $19$  & $19$  & $13$ 
& & $25$ & $25$ & $25$ & $19$ \\
$3$    & & $19$ & $19$  & $19$  & $17$ 
& & $25$ & $25$ & $25$ & $19$ \\
$4$    & & $19$ & $19$ & $19$ & $19$
& & $25$ & $25$  & $25$ & $25$ \\
$5$          & & $19$ & $20$  & $19$  & $19$ 
& & $25$ & $25$  & $25$ & $25$ \\
$6$     & & $26$  & $29$   & $26$   & $24$ 
& & $31$ & $31$  & $31$ & $25$ \\

\end{tabular}
\end{subtable}
\end{adjustbox}

\label{tab:cu-cgn-none}
\end{table}

Even for high values of $\lambda$  and in contrast to Picard's method (Picard-MG), our technique (MPE-Picard-MG) converges. We remark that there is an increasing number of iterations of the MPE-Picard-MG method as $\lambda$ grows. This suggests that convergence might be more challenging for larger values of $\lambda$. In addition, the number appears to be influenced by the spline degree $p$ \cite{R14,R40,R41,R42,R43}; however, the trend is not consistent with our solver. As a consequence, our approach is sustainably stable and robust with respect to the spline degree $p$ and the number of points  $N$.

\subsection{The $2D$ Bratu problem}  \label{sub2sec4}
Finally, in order to illustrate how extrapolation might improve the Picard approach in the $2D$ case, we test the polynomial extrapolation acceleration techniques on the $2D$ Bratu problem (\ref{eq2dbratu}) on a square domain discretized with the IGA method. The right-hand side $f$ was chosen such that the exact solution is 
$$u(x,y)= (x-x^2)(y-y^2).$$
Using a single V-cycle with weighted Jacobi with $w=2/3$ as a relaxation method for solving the linear problem in each Picard iteration, we imply the same technique as in the $1D$ example. In the following, we will illustrate the performance of our technique. Therefore, we will compare in Figure~\ref{figure6} the performances of the polynomial extrapolation methods and the Anderson extrapolation technique for accelerating the iterations of the Picard iterative method for solving the $2D$ Bratu problem. Figure~\ref{figure6} shows the results of the Picard-MG, MPE-Picard-MG, RRE-Picard-MG, and AA-Picard-MG methods when we try to report the number of iterations to achieve the desired accuracy of $10^{-08}$ and the $L_2$ norm of the error for different values of $\lambda=6.966$ (which is the critical value of the parameter $\lambda$ proposed by~\cite{R44} and $\lambda=6.808$ is a more precise critical value proposed by Glovinski, Keller, and Reinhart~\cite{R45}) and $\lambda=17$ in a fixed grid of $N=64$ points in each direction and for spline degree $p=5$.

\begin{figure}[ht!]
    \centering
   \includegraphics[width=6.5cm,height=6.2cm]{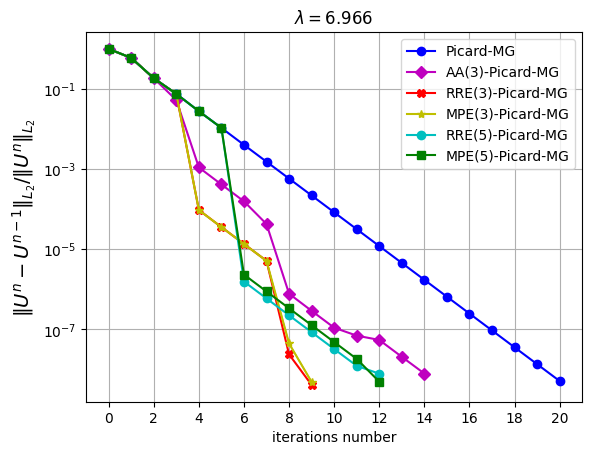}
   \includegraphics[width=6.5cm,height=6.2cm]{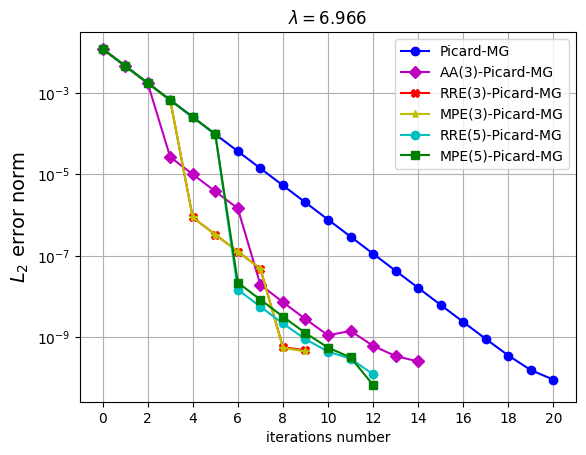}

   \includegraphics[width=6.5cm,height=6.2cm]{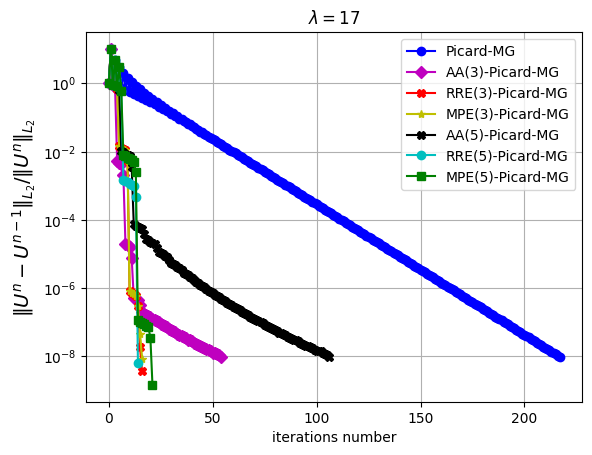 }
      \includegraphics[width=6.5cm,height=6.2cm]{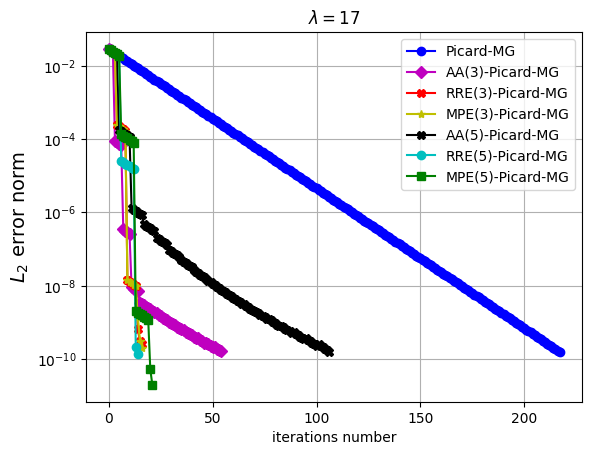 }
   \caption{ Convergence results of the Picard method with and without extrapolation acceleration techniques for the $2D $ Bratu with $\lambda=6.966 $ and $\lambda=17 $}
   \label{figure6}
\end{figure}

We observe the same kind of results as before in the $1D$ case. Polynomial extrapolation methods accelerate considerably the convergence of the Picard iterative method in this $2D$ case, especially for high values of the parameter $\lambda$. Hence, our technique is eﬃcient.

\begin{figure}[ht!]
    \centering
   \includegraphics[width=6.5cm,height=6cm]{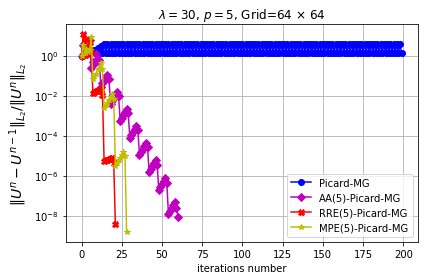}
   \includegraphics[width=6.5cm,height=6cm]{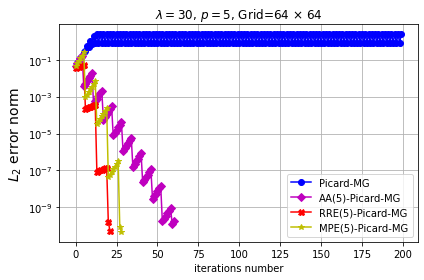}

   \caption{ Behavior of the Picard method for the $2D$ Bratu problem with and without extrapolation acceleration techniques for high values of $\lambda$}
   \label{figurelamda30}
\end{figure}
We can clearly see from Figure~\ref{figurelamda30} that when the Picard method diverges, polynomial extrapolation techniques significantly enhance Picard's convergence, and the convergence results of the two methods, RRE-Picard-MG and MPE-Picard-MG, are comparable.
Additionally, we find that polynomial extrapolation methods provide better convergence results than Anderson's acceleration method.

In the following Table~\ref{table3}, we will reproduce the convergence results for the previous methods, but this time we compute the time of execution in seconds (CPU) for each of these methods for various values of $\lambda$. It is important to remember that the codes were implemented in a sequential manner. It is clear that in this $2D$ case, the polynomial extrapolation techniques take less time to execute than Picard and Picard with the Anderson acceleration method. 

We conclude by demonstrating in Table \textcolor{blue}{4} the effectiveness and the performance of our strategy, the MPE-Picard-MG solver, for addressing the $2D$ Bratu problem. We will try to vary $\lambda, p$, and $N$, the three parameters in the $2D$ Bratu model problem, and we illustrate the number of iterations required to reach convergence up to a precision of $10^{-08}$. We remark that the iteration numbers remain uniformly bounded as we increase the parameter $\lambda$ and the spline degree $p$. Then we conclude that our technique, which combines the Picard iterative method with the polynomial extrapolation methods, is robust and optimal with respect to $\lambda$, $p$, and $N$.

\begin{table}[ht!]
    \centering  
      \caption{Extrapolation and Picard methods were applied to the $2D$ Bratu problem for different values of $\lambda$. The number of iterations and CPU time in seconds to reduce the nonlinear relative tolerance to $10^{-08}$ for spline degree $p=5$ and $N=64$}
      \label{table3}
\begin{tabular}{ |c|c|c|c|c|c| } 
\hline
 $\lambda$ & Method &iter  & $L_{2}\text{-err}$  & CPU(s)\\
\hline

\multirow{3}{4em}{3} 
&Picard-MG & 12 & 2.30e-10  & 192.48\\ 
& AA(3)-Picard-MG & 10 &  2.27e-10 &  152.68\\  %here with q=5
& RRE(3)-Picard-MG & 8 &  5.55e-10 &  120.33\\   %here with q=5
& MPE(3)-Picard-MG & 8 & 5.55e-10 & 114.91 \\  %here with q=5

\hline
\multirow{3}{4em}{6.966} 
&Picard-MG & 21 &  8.69e-11 & 426.45 \\  
& AA(3)-Picard-MG & 15 &  2.44e-10 & 311.75 \\   %here with q=5
& RRE(3)-Picard-MG  & 10 & 4.56e-10  & 224.71 \\  %here with q=5
& MPE(3)-Picard-MG  & 10&  4.32e-10 & 218.22 \\
& RRE(5)-Picard-MG  & 13 &  1.17e-10 & 250.23 \\  %here with q=5
& MPE(5)-Picard-MG  & 13&  6.46e-11 &  241.04 \\
\hline 
%\multirow{3}{4em}{10} 
%&Picard-MG & 32 &  1.01e-10 & 1201.62 \\ 
%& AA(3)-Picard-MG &  &   &  \\   %here with q=5
%& RRE(3)-Picard-MG   &  &   &  \\  
%& MPE(3)-Picard-MG   &  &  &\textcolor{cyanish}{\textbf{ }} \\  
%\hline 

\multirow{3}{4em}{17}   
&Picard-MG & 218&  1.52e-10   &  3164.64  \\ 
& AA(3)-Picard-MG & 55 & 1.58e-10 & 641.06\\  %here with q=5
& RRE(3)-Picard-MG   & 17 & 2.50e-10& 259.05\\  
& MPE(3)-Picard-MG   & 17 & 1.92e-10 &226.43 \\  
& AA(5)-Picard-MG & 107 & 1.56e-10 & 1482.12 \\  %here with q=5
& RRE(5)-Picard-MG &15 & 1.31e-10 & 225.50 \\ 
& MPE(5)-Picard-MG &  22 & 1.91e-11 & 379.86\\  
\hline  
%\multirow{3}{4em}{20} 
%&Picard-MG &  &   &  \\ 
%& AA(3)-Picard-MG &  &   &  \\   %here with q=5
%& RRE(3)-Picard-MG  &  &   &  \\   
%& MPE(3)-Picard-MG   &  &  &\textcolor{cyanish}{\textbf{ }} \\  
%& AA(5)-Picard-MG &  &   &  \\  %here with q=5
%& RRE(5)-Picard-MG &  &   &  \\ 
%& MPE(5)-Picard-MG &  &  &\textcolor{cyanish}{\textbf{ }} \\ 
%& MPE(8)-Picard-MG &  &   &  \\  %here with q=8
\end{tabular}
%\caption{ Extrapolation and Picard methods were applied to the Bratu equation at different grid sizes and for spline degree $p=5$. Number of iterations and CPU time in seconds to reduce the nonlinear relative tolerance to $10^{-12}$ for $\lambda=7$.}
\end{table}

\begin{table}[ht!]

\centering
		\textbf{Table 4} Convergence results of the MPE method with $q=5$, applied to the $2D$ Bratu equation at different grid sizes and for different spline degrees and $\lambda$. The number of iterations to reduce the nonlinear relative tolerance to $10^{-08}$ 
  
\begin{tabular}{lllllllll}
\hline
\hline
\end{tabular}
\begin{adjustbox}{width=1.\textwidth}
\begin{subtable}{1.\textwidth} 
    \centering 
    \hspace*{-2. cm}
\begin{tabular}{c p{0.cm} cccc  c cccc}
\hline
 &  & \multicolumn{4}{l}{$\lambda=3$} &  & \multicolumn{4}{l}{$\lambda=6.966$} \\ \cmidrule(l){2-6} \cmidrule(l){7-11}
\diaghead{taun-----}{$p$}{$N$} &  & $16$   & $32$   & $64$   & $128$    & & $16$   & $32$   & $64$   & $128$    \\ \hline
$1$    & & $7$  & $7$   & $7$   & $7$  
& & $7$   & $7$    & $7$   & $7$   \\
$2$    & & $9$  & $7$   & $7$   & $7$ 
& & $12$   & $9$   & $7$   & $7$  \\

$3$    & & $11$  & $9$   & $8$  & $7$ 
& & $13$   & $12$  & $9$   & $8$ \\

$4$    & & $15$  & $10$   & $8$   & $7$  
& & $15$   & $14$   & $10$   & $9$  \\

$5$          & & $15$   & $13$    & $8$    & $8$ 
& & $18$   & $16$  & $13$   & $10$ \\

\end{tabular}
\end{subtable}
\end{adjustbox}

%+++++++++++++++++++++++++++++++++++++++++++++++++++++++++++++++++++++++++
\vspace{.5cm}
%+++++++++++++++++++++++++++++++++++++++++++++++++++++++++++++++++++++++++

\begin{adjustbox}{width=1.\textwidth}
\begin{subtable}{\textwidth} 
    \centering 
    \hspace*{-2. cm}
\begin{tabular}{c p{0.cm} cccc  c cccc}

\hline
 &  & \multicolumn{4}{l}{$\lambda=10$} &  & \multicolumn{4}{l}{$\lambda=17$} \\ \cmidrule(l){2-6} \cmidrule(l){7-11}
\diaghead{taun-----}{$p$}{$N$} &  & $16$   & $32$   & $64$   & $128$    & & $16$   & $32$   & $64$   & $128$   \\ \hline
$1$    & & $11$ & $13$  & $15$ & $7$
& & $15$ & $15$ & $15$ & $9$ \\
$2$    & & $15$ & $15$  & $15$  & $11$
& & $17$ & $21$ & $20$ & $15$ \\
$3$    & & $15$ & $15$  & $15$  & $15$
& & $16$ & $19$ & $20$ & $15$ \\
$4$    & & $17$ & $16$  & $15$  & $15$ 
& & $17$ & $17$ & $20$ & $21$ \\
$5$          & & $21$ & $19$  & $15$  & $16$ 
& & $26$ & $24$  & $22$  & $21$\\

\end{tabular}
\end{subtable}
\end{adjustbox}

%\caption{Convergence results of MPE method with $q=5$, applied to the $2D$ Bratu equation at different grid sizes and for different spline degrees and $\lambda$. The number of iterations to reduce the nonlinear relative tolerance to $10^{-12}$.}

  \label{tab:curl-cgn-none}  
\end{table}

\subsection{The Monge–Ampère equation}  
In order to demonstrate the efficiency of extrapolation methods for accelerating the convergence of the Picard iterative method combined with a multigrid solver for the resolution of nonlinear problems. We conclude by considering the numerical solution of the $2D$ elliptic standard Monge–Ampère equation (\ref{eq2dMonge_Ampere}) with non-homogeneous Dirichlet boundary conditions on the unit square.

We will focus on polynomial extrapolation techniques for the Monge-Ampère problem (\ref{eq2dMonge_Ampere}) as we have demonstrated their efficiency over Anderson acceleration through several numerical results in Subsection ~\ref{sub1sec4} and ~\ref{sub2sec4} for the $1D$ (\ref{eq1dbratu}) and $2D$ (\ref{eq2dbratu}) Bratu problems.\\
We choose a radially symmetric convex solution in $\mathbb{C}^{\infty}(\Omega)$ for the example that follows. Next, the exact solution and the right-hand side of the Monge-Ampère problem \ref{eq2dMonge_Ampere} are provided respectively, by:
$$u(x,y)= e^{\frac{x^2 +y^2}{2}},$$
and
$$f(x, y) = (1+x^2 +y^2) e^{x^2 +y^2}.$$ 
The boundary function $g$ is given as the restriction of the exact solution to the boundary $\Gamma=\partial \Omega$:
$$g= \begin{cases}
e^{\frac{x^2 }{2}},& \,\,\, 0< x< 1, \,y=0,   \\
 e^{\frac{y^2}{2}},& \,\,\, x=0, \, 0< y< 1,  \\
e^{\frac{x^2 +1}{2}},& \,\,\, 0< x< 1, \, y=1,  \\
 e^{\frac{1 +y^2}{2}},& \,\,\, x=1, \, 0< y< 1.  \\
  \end{cases}$$
Unlike the Bratu problem (\ref{eq2dbratu}), where we used a single iteration of V-cycle in each Picard iteration (see \ref{sub1sec4} and \ref{sub2sec4}), here for the resolution of Monge-Ampère problem (\ref{eq2dMonge_Ampere}) we apply the V-cycle as a solver for the elliptic problem in each iteration of Picard (\ref{eq2dMonge_Ampere_2}). We will use the V-Cycle scheme with weighted Jacobi where the relaxation parameter is $\omega = 2/3$. The following Figure~\ref{figure20} shows the convergence results of the Picard-MG method, the RRE-Picard-MG, and the MPE-Picard-MG methods with restart number $q=5$ and $q=8$ for spline degree $p=3$ at different grid sizes. We report the number of iterations to reduce the “Relative Residual" which is the $L_2$ norm of 
%the relative residual
the difference between successive Picard iterations
$\frac{\Vert U^{n}-U^{n-1} \Vert_{L_{2}}}{\Vert U^{n} \Vert_{L_2}}$. The stopping criteria is when the accuracy of $10^{-10}$ is reached.

\begin{figure}[ht!]
\centering
   % \includegraphics[width=5cm,height=5cm]{}
   % \includegraphics[width=5cm,height=5cm]{}
    %\includegraphics[width=5cm,height=5cm]{}
   % \captionsetup{justification=centering}
    %\includegraphics[width=5cm,height=5cm]{}
   %\includegraphics[width=5cm,height=5cm]{}
    %\includegraphics[width=5cm,height=5cm]{}
    
    %\includegraphics[width=5cm,height=5cm]{}
   %\includegraphics[width=5cm,height=5cm]{}
    %\includegraphics[width=5cm,height=5cm]{}

    \includegraphics[width=5cm,height=5cm]{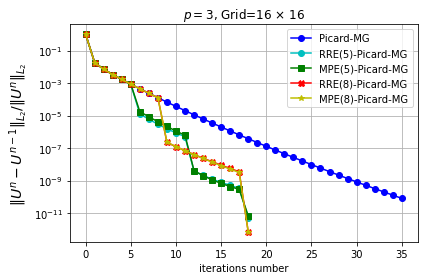}
   \includegraphics[width=5cm,height=5cm]{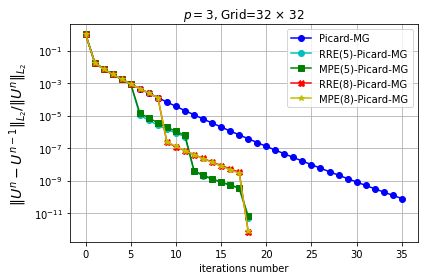}
    \includegraphics[width=5cm,height=5cm]{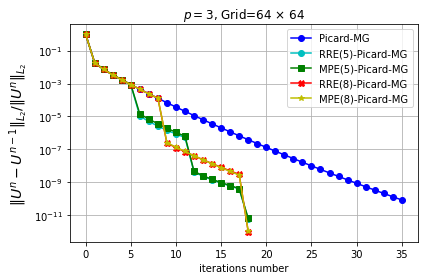}
    \caption{Convergence of the Picard method for Monge-Ampère with and without extrapolation acceleration for $p=3$ at different grid sizes}
    \label{figure20}
\end{figure}

%The $L_2$ norm error ($\Vert u-u_h \Vert _{L_2(\Omega)}$) where is the $u_h$ computed approximate solution curves for the above-described approaches for $p=3$ are plotted in Figure \ref{figure21} for various grid sizes. 
Using the methods previously discussed, the $L_2$ norm errors ($\Vert u - u_h \Vert _{L_2(\Omega)}$), where $u_h$ is the computed approximate solution, are displayed for $p=3$ and different grid sizes in Figure \ref{figure21}.

\begin{figure}[ht!]
\centering
   % \includegraphics[width=5cm,height=5cm]{}
   % \includegraphics[width=5cm,height=5cm]{}
   % \includegraphics[width=5cm,height=5cm]{}
    %\captionsetup{justification=centering}
      %\includegraphics[width=5cm,height=5cm]{}
   %\includegraphics[width=5cm,height=5cm]{}
   % \includegraphics[width=5cm,height=5cm]{}
     \includegraphics[width=5cm,height=5cm]{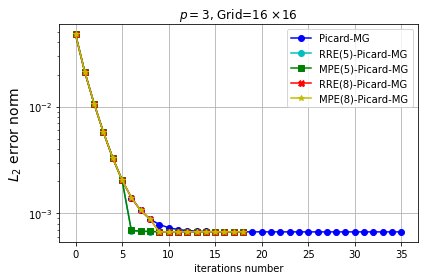}
   \includegraphics[width=5cm,height=5cm]{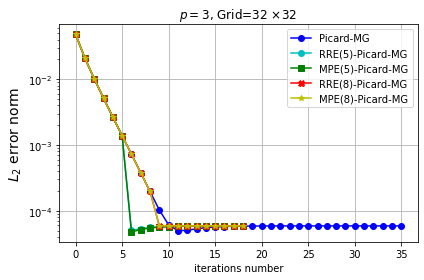}
    \includegraphics[width=5cm,height=5cm]{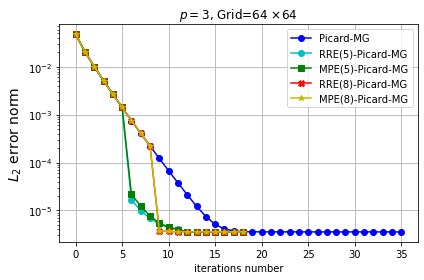}
    \caption{Error evolution of Monge-Ampère problem for spline degree $p=3$ on different grid sizes}
    \label{figure21}
\end{figure}

Figures \ref{figure20} and \ref{figure21} illustrate how well Picard's approach converges and how the errors decrease with increasing grid size. Nonetheless, we find that using polynomial extrapolation techniques on Picard allows us to obtain an adequate error in a lower number of iterations than the Picard method.

The $L_2$ norms of the error of the computed approximate solution $u_h$ of the Monge-Ampère equation with the exact solution and the CPU time are provided in Table~\ref{MA table}. In addition, “CPU(s)” indicates the total CPU time in seconds for the Picard-MG method, as well as the RRE(5)-Picard-MG and MPE(5)-Picard-MG approaches. The overall time spent assembling the right-hand side during all Picard iterations is indicated by “RHS time(s)”. In all Picard iterations, the total time required for solving the linear problem with the V-cycle solver and weighted Jacobi smoother is denoted by “MG time(s)”. The CPU time needed for the extrapolation process is referred to as “Extrapolation time(s)”. The tolerance used for the linear system in the V-cycle solver is included in the grid cell of the Table~\ref{MA table}. 
%Keep in mind that the computation time measurements represent the total computation time.

For the Monge-Ampère equation, polynomial extrapolation techniques considerably accelerate the convergence of Picard iterations. Applying the restarted MPE and RRE techniques every 5 steps clearly reduces the number of iterations required by Picard's method, as Table~\ref{MA table} shows. It also demonstrates how Picard iterations with and without extrapolation behave in terms of convergence, as well as how errors decrease with increasing grid size. When compared to Picard-MG, the CPU time required to achieve the desired precision is significantly reduced when using the MPE-Picard-MG and RRE-Picard-MG approaches. Moreover, our hybrid method seems to be robust towards grid sizes and spline degrees. Additionally, we note that the MPE-Picard-MG and RRE-Picard-MG strategies have very similar convergence results. In the current implementation, explicit matrix assembly is used in the computation and is shown to be computationally costly for large-scale problems. Future works will address this challenge in two important ways: integrating Matrix-Free techniques to avoid explicit right-hand side (RHS) assembly, which will reduce memory usage and computational costs; and parallelizing the code using OpenMP or MPI to improve scalability and efficiency, especially for multi-dimensional problems. While preserving numerical precision, these improvements aim to speed up overall efficiency.

\newpage
\begin{table}[htbp!]
    \centering
        \caption{The Monge-Ampère equation: Iteration count, error evolution, and CPU time in seconds for various spline\\ degrees at different grid sizes using Picard with and without extrapolation methods with restart number $q=5$} 
              \label{MA table}  
   %\resizebox{\textwidth}{!}{%
    \begin{tabular}{|c|c|c|c|c|c|c|c|c|}
        \hline
        Grid & Method & iter & Relative Residual & $L_2$-err & CPU(s) & RHS time(s) & MG time(s) & Extrapol time(s) \\
        \hline
        \multicolumn{9}{|c|}{\textbf{Spline degree $p=2$}} \\
        \hline
        \textbf{8 $\times$ 8} & Picard-MG & 38 & 7.42e-11 & 2.71e-02 & 4.41& 2.68 & 0.0752 & ---\\ 
                              (tol=$10^{-02}$)  & RRE(5)-Picard-MG & 19 &2.59e-11 &2.71e-02 & 2.37 &1.30 & 0.0531 & 0.000908 \\
                         & MPE(5)-Picard-MG  &  19 &  2.91e-11 &  2.71e-02 & 2.34 & 1.26 & 0.0435 & 0.000864 \\
        \hline
                   \textbf{16 $\times$ 16}  & Picard-MG          & 37 &6.77e-11 & 8.84e-03 & 11.01 & 5.20 & 0.375 & --- \\ 
                    (tol=$10^{-02}$)    & RRE(5)-Picard-MG & 19 &  2.79e-12& 8.84e-03 & 5.80 & 2.59 & 0.230 & 0.000751 \\
                          & MPE(5)-Picard-MG &  19  &  3.40e-12  &  8.84e-03  &5.95 & 2.69  & 0.190 & 0.000672   \\
            \hline
           \textbf{32 $\times$ 32}  & Picard-MG          & 37 & 7.32e-11 & 5.00e-04 & 47.66 &20.61   & 13.05  & --- \\ 
                     (tol=$10^{-03}$)   & RRE(5)-Picard-MG & 19 &6.49e-12 & 5.00e-04 &  26.92 & 10.95 & 7.47 & 0.00124 \\
                           & MPE(5)-Picard-MG & 19 &  6.96e-12 & 5.00e-04  &  25.98 & 10.84 & 6.78 & 0.000893 \\
            \hline

            \textbf{64 $\times$ 64} & Picard-MG        & 36 & 8.51e-11 &2.75e-04  &  259.38 & 85.23&  123.21 & ---\\ 
                  (tol=$10^{-03}$)    & RRE(5)-Picard-MG & 19 & 3.68e-12& 2.75e-04  &  137.95 & 43.12  & 64.87  & 0.00218  \\
                         & MPE(5)-Picard-MG &19  &  4.77e-12 & 2.75e-04 & 135.44 & 42.25  &  63.21 & 0.00165   \\
            \hline

           \textbf{128 $\times$ 128 } & Picard-MG         & 36 &  8.39e-11 & 1.05e-05 &  2812.18 & 326.96 & 2296.79  &  --- \\ 
                   (tol=$10^{-04}$)    & RRE(5)-Picard-MG & 19 & 3.40e-12& 1.05e-05  & 1456.89 & 167.13  & 1180.91  &  0.0271 \\
                       & MPE(5)-Picard-MG &19  &5.78e-12  & 1.05e-05 &  1473.27 & 171.90  & 1191.37  & 0.0200 \\
        \hline
        \hline

        \multicolumn{9}{|c|}{\textbf{Spline degree $p=3$}} \\
                \hline
 \textbf{8 $\times$ 8 } & Picard-MG      & 37 & 7.84e-11 & 1.98e-03 & 8.31 & 3.63 & 0.39 &---  \\ 
                     (tol=$10^{-02}$)  & RRE(5)-Picard-MG  & 19 &5.99e-12 &1.98e-03 & 4.36 &1.80 &0.203 & 0.000708 \\
                         & MPE(5)-Picard-MG  &  19 &  4.83e-12  &  1.98e-03 & 4.40 & 1.85 & 0.192 & 0.000612  \\
            \hline
         \textbf{16 $\times$ 16}    & Picard-MG      & 36 &8.14e-11 & 6.72e-04 & 25.54 &14.12 & 1.41 & --- \\ 
                    (tol=$10^{-03}$)   & RRE(5)-Picard-MG  & 19 &  5.32e-12& 6.72e-04 & 13.66 &  7.14 &  0.81 & 0.000794  \\
                        & MPE(5)-Picard-MG  &  19  &  7.01e-12  &  6.72e-04  &13.99  & 7.36 & 0.79 &   0.000682 \\
            \hline
         \textbf{32 $\times$ 32}    & Picard-MG       & 36 & 8.06e-11 & 5.91e-05 & 105.05  & 56.42 &  16.26 & --- \\ 
                    (tol=$10^{-04}$)   & RRE(5)-Picard-MG  & 19 &5.25e-12 & 5.91e-05 &   59.45 & 30.29 & 9.59 & 0.00104 \\
                          & MPE(5)-Picard-MG  & 19 &  6.73e-12 & 5.91e-05  &  56.61 & 29.28 & 8.11 &  0.000838 \\
            \hline

        \textbf{ 64 $\times$ 64}    & Picard-MG        & 36 & 8.24e-11 &3.54e-06 &  580.14 & 231.42 & 222.09 & --- \\ 
                  (tol=$10^{-05}$)   & RRE(5)-Picard-MG  & 19 & 5.59e-12& 3.54e-06  &  304.02 & 117.71 & 112.92 & 0.00201 \\
                         & MPE(5)-Picard-MG  &19  &  6.26e-12 & 3.54e-06 & 321.22  &123.54 & 119.78 & 0.00194  \\
            \hline

         \textbf{128 $\times$ 128}    & Picard-MG        & 36 &  8.27e-11 & 8.21e-07 &  4904.35 & 951.45 &  3447.58 & --- \\ 
                  (tol=$10^{-06}$)   & RRE(5)-Picard-MG  & 19 & 7.24e-12& 8.21e-07  & 2630.99 & 493.40 & 1815.14 &  0.0414 \\
                       & MPE(5)-Picard-MG  &19  &8.81e-12  & 8.21e-07 &  2721.56  & 508.94 & 1893.77 & 0.0248  \\
            \hline
         \hline

         \multicolumn{9}{|c|}{\textbf{Spline degree $p=4$}} \\
        \hline
          \textbf{8 $\times$ 8 }   & Picard-MG       & 37 & 8.67e-11 & 3.54e-05 & 17.87 & 8.49   & 2.38  & ---     \\ 
                (tol=$10^{-04}$)     & RRE(5)-Picard-MG & 19 &6.70e-11 &3.54e-05 & 9.52     & 4.40   & 1.22 & 0.000738 \\
                         & MPE(5)-Picard-MG&  19 &  5.37e-11  &  3.54e-05 & 9.75    & 4.46   & 1.39 & 0.000613 \\
            \hline
        \textbf{16 $\times$ 16}     & Picard-MG      & 36 &8.27e-11 & 3.78e-06 & 67.01 & 35.24  & 10.88 & ---    \\ 
                  (tol=$10^{-05}$)     & RRE(5)-Picard-MG & 19 &  6.62e-12& 3.78e-06 & 33.87    & 17.24   & 4.91 & 0.000833 \\
                       & MPE(5)-Picard-MG &  19  &  9.49e-12  & 3.78e-06  & 34.99    & 18.13  & 4.67 & 0.000665   \\
            \hline
           \textbf{32 $\times$ 32}  & Picard-MG        & 36 & 8.36e-11 & 3.03e-06 & 326.27 & 150.04   & 94.39 & ---     \\ 
                  (tol=$10^{-06}$)    & RRE(5)-Picard-MG & 19 & 4.64e-12 & 3.03e-06 &  169.68    & 76.43   & 45.38& 0.00106 \\
                         & MPE(5)-Picard-MG & 19 &  7.03e-12 & 3.03e-06  &  155.28    & 71.75   & 39.53 & 0.000920 \\
            \hline

          \textbf{ 64 $\times$ 64}  & Picard-MG        & 36 & 8.24e-11 &7.17e-07  &  1570.53 & 548.08  & 733.29 & ---    \\ 
                   (tol=$10^{-07}$)    & RRE(5)-Picard-MG & 19 & 5.35e-12& 7.17e-07 &  844.20     & 286.55  & 383.31 & 0.00208 \\
                           & MPE(5)-Picard-MG &19  &  6.80e-12 & 7.17e-07 & 831.90      & 282.18   & 378.64 & 0.00193 \\
            \hline

            \textbf{ 128 $\times$ 128 }& Picard-MG       & 36 & 8.19e-11  & 2.53e-07 & 9769.46  & 2256.38  & 6370.82 & ---    \\ 
                      (tol=$10^{-08}$)    & RRE(5)-Picard-MG & 19 & 3.43e-12& 2.53e-07   & 4597.89     & 1176.27  & 3009,58 & 0.0796 \\
                          & MPE(5)-Picard-MG & 19 & 5.72e-12 & 2.53e-07 &  4491.40    & 1113.51 & 3054,09 & 0.0616 \\
            \hline
    \end{tabular}%
   % \caption{The Monge-Ampère equation: Iteration count, error evolution, and CPU time in seconds for various spline degrees at different grid sizes using Picard and extrapolation methods with restart number $q=5$}
   
\end{table}

\section{Conclusion}
Through this paper, we are able to build a new solver: a hybrid approach that combines the Picard iterative method and the polynomial type extrapolation methods (MPE-Picard-MG and RRE-Picard-MG) for the resolution of nonlinear problems such as the nonlinear eigenvalue Bratu problem and the Monge-Ampère equation, which is concerned with using the restarted version of the extrapolation techniques to accelerate the iterations of the Picard method with multigrid as a linear solver. The motivation behind that is well known: the convergence of the Picard method stagnates and, in most cases, diverges, so coupling polynomial extrapolation techniques with Picard improves Picard's convergence. We have performed a numerical analysis showing the performance of our solver compared to the Picard method without extrapolation acceleration techniques and the Picard method accelerated by the Anderson method for solving the Bratu problem in one and two dimensions and the two-dimensional Monge-Ampère equation. Therefore, using just a single V-cycle in each iteration of the Picard iterative method for the Bratu problem, a robust convergence is established numerically for our approaches (RRE-Picard-MG and MPE-Picard-MG) with respect to the parameters $\lambda$ in the Bratu equation, the spline degree $p$, and the number of points $N$, while the Picard method diverges, especially for high values of the parameter $\lambda$. As a consequence, compared to Picard's method without extrapolation (Picard-MG) or even Picard accelerated using Anderson acceleration (AA-Picard-MG), we achieve better convergence outcomes and a significant reduction in CPU time for our approaches. When solving the Monge-Ampère equation, we use the V-cycle as a solver for the elliptic problem in every Picard iteration, as opposed to the Bratu problem, when we only used a single iteration of the V-cycle in every Picard iteration, and we have clarified that polynomial extrapolation techniques provide superior convergence results than the Picard iterative method.

%we achieved positive results from our approach than using Picard method alone or.

%which has both theoretical and practical significance. By lowering the computing cost, this estimating technique could improve the efficiency of using various extrapolation approaches.

%\begin{thebibliography}{1}

%\end{thebibliography}

%\bibliography{sn-bibliography}% common bib file
%% if required, the content of .bbl file can be included here once bbl is generated

\end{document}